\theoremstyle{plain}\newtheorem{Theorem}{Theorem}[section]
\theoremstyle{plain}
\theoremstyle{plain}\newtheorem{Corollary}[Theorem]{Corollary}
\theoremstyle{plain}\newtheorem{Lemma}[Theorem]{Lemma}
\theoremstyle{plain}\newtheorem{Proposition}[Theorem]{Proposition}
\theoremstyle{plain}
\theoremstyle{definition}
\theoremstyle{definition}
\theoremstyle{definition}
\theoremstyle{definition}\newtheorem{Remark}[Theorem]{Remark}
\theoremstyle{definition}
    \def\OG{{\mathcal{O}G}}  \def\OGb{{\mathcal{O}Gb}}
    \def\OH{{\mathcal{O}H}}  \def\OHc{{\mathcal{O}Hc}}
    \def\OP{{\mathcal{O}P}}
\def\CO{{\mathcal{O}}}
\def\add{\mathrm{add}}
             \def\ten{\otimes}
\def\Ch{\mathrm{Ch}}
\def\chr{\mathrm{char}}
\def\coMack{\mathrm{coMack}}
\def\End{\mathrm{End}}
\def\Hom{\mathrm{Hom}}
\def\Id{\mathrm{Id}}  \def\tenA{\otimes_A} \def\tenE{\otimes_E}
  \def\tenB{\otimes_B} \def\tenF{\otimes_F}
           \def\tenO{\otimes_{\mathcal{O}}}
\def\mod{\mathrm{mod}}
\def\op{\mathrm{op}}
\def\proj{\mathrm{proj}}
         \def\tenOQ{\otimes_{\mathcal{O}Q}}
           \def\tenOR{\otimes_{\mathcal{O}R}}
             \def\tenOT{\otimes_{\mathcal{O}T}}
\def\ualpha{\underline{\alpha}}
\title{On Morita and derived equivalences for cohomological 
Mackey algebras} 
\author{Markus Linckelmann and Baptiste Rognerud} 
\date{}
\begin{document}

\maketitle

\begin{abstract}
By results of the second author, a source algebra equivalence 
between two $p$-blocks of finite groups induces an equivalence 
between the categories of cohomological Mackey functors associated 
with these blocks, and a splendid derived equivalence between two 
blocks induces a derived equivalence between the corresponding
categories of cohomological Mackey functors. The main result of
this paper proves a partial converse: an equivalence (resp. Rickard 
equivalence) between the categories of cohomological Mackey 
functors of two blocks of finite groups induces a permeable Morita 
(resp. derived) equivalence between the two block algebras.
\end{abstract}

\section{Introduction}

Let $p$ be a prime and $\CO$ a complete local principal ideal domain
with residue field $k=$ $\CO/J(\CO)$ of characteristic $p$; we allow 
the case $\CO=$ $k$. Let $G$ be a finite group.  The blocks of $\OG$ 
are the primitive idempotents in $Z(\OG)$. For $b$ a block of $\OG$,
denote by $\coMack(G,b)$ the abelian category of cohomological Mackey
functors of $G$ associated with $b$, with coefficients in the 
category $\mod(\CO)$ of finitely generated $\CO$-modules. 
The second author showed in \cite{Rogn} that if two block algebras 
$\OGb$ and $\OHc$ of finite groups $G$ and $H$ are 
splendidly Morita or derived equivalent, then
$\coMack(G,b)$ and $\coMack(H,c)$ are equivalent or derived equivalent,
respectively. We show that a Morita or Rickard equivalence between
$\coMack(G,b)$ and $\coMack(H,c)$ induces a Morita or derived 
equivalence between $\OGb$ and $\OHc$.

\begin{Theorem} \label{thm1}
Let $G$, $H$ be finite groups, $b$ a block of $\OG$ and $c$ a
block of $\OH$. An equivalence between the abelian categories 
$\coMack(G,b)$ and $\coMack(H,c)$ (resp. a Rickard equivalence between
their chain homotopy categories) induces a permeable Morita equivalence
(resp. a derived equivalence) between the block algebras $\OGb$ and $\OHc$.
\end{Theorem}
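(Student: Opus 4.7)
The plan is to exhibit each block algebra as the endomorphism ring of a distinguished projective Mackey functor and to transfer the given equivalence through this identification. Write $A_{G,b}$ for the cohomological Mackey algebra associated with $(G,b)$, so that $\coMack(G,b) \simeq A_{G,b}\text{-mod}$. There is a canonical idempotent $e_G \in A_{G,b}$ indexed by the trivial subgroup $1 \leq G$ satisfying $e_G A_{G,b} e_G \cong \OGb$; correspondingly, $P_G := A_{G,b}\, e_G$ is a projective object in $\coMack(G,b)$ with $\End(P_G)^{\op} \cong \OGb$, and analogously $P_H \in \coMack(H,c)$ satisfies $\End(P_H)^{\op} \cong \OHc$.

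Given an equivalence $F \colon \coMack(G,b) \to \coMack(H,c)$, the image $F(P_G)$ is projective in $\coMack(H,c)$ with endomorphism ring $\OGb$. The central step of the proof is to establish the equality $\add F(P_G) = \add P_H$ as additive subcategories of $\coMack(H,c)$. Once this is done, $F(P_G)$ and $P_H$ are progenerators of a common additive subcategory, and the bimodule $N := \Hom_{\coMack(H,c)}(P_H, F(P_G))$ is an $(\OHc, \OGb)$-bimodule that realises a Morita equivalence between the block algebras. The \emph{permeable} property is then to be read off from the Mackey-functorial construction of $N$, which by its origin respects the compatibility with subgroup-restriction and transfer data encoded in the Mackey category.

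The main obstacle is establishing $\add F(P_G) = \add P_H$: since $F$ is only an abstract equivalence, this calls for an intrinsic categorical characterisation of the subcategory $\add P_G \subseteq \coMack(G,b)$ that is manifestly preserved by $F$. A description in terms of ``Mackey functors concentrated at the trivial subgroup'' invokes the subgroup labelling and is not intrinsic, so one likely needs a homological characterisation --- for instance, Ext-vanishing conditions distinguishing the simple cohomological Mackey functors $S_{1,V}$ (indexed by the trivial subgroup) from the simples $S_{Q,V}$ for non-trivial $p$-subgroups $Q$, reflecting the cohomologically distinguished role of the trivial subgroup. For the derived case, the same strategy applies with $F$ replaced by a two-sided tilting complex, $P_G$ by a suitable perfect complex, and additive hulls by thick subcategories; the loss of the permeability conclusion in that case plausibly reflects the coarser invariants available in the triangulated setting.
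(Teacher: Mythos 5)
Your high-level strategy does match the paper's: realise $\OGb$ as the (opposite) endomorphism ring of a distinguished projective object $P_G$, show that any equivalence $\coMack(G,b)\to\coMack(H,c)$ carries $\add(P_G)$ to $\add(P_H)$ by giving an intrinsic characterisation of that additive subcategory, and then read off a Morita equivalence between the idempotent truncations. But the step you yourself flag as ``the main obstacle'' is precisely the content of the paper, and your proposal leaves it open. The paper's intrinsic characterisation is not an Ext-vanishing condition on simples: it is that $\add(P_G)$ consists exactly of the projective indecomposables that are also relatively $\CO$-injective (Proposition~\ref{source1}, built on Proposition~\ref{EndXinj} and Lemma~\ref{lemmaOinj3}). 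Proving this uses the source-algebra description of $\coMack(G,b)$ as $\mod(E^\op)$ with $E=\End_A(X)$, $X=\oplus_R A\tenOR\CO$, and hinges on $A$ being symmetric (so projective $=$ injective over $A$) together with the transfer lemmas (Lemma~\ref{HomXMinj}). Working directly with the cohomological Mackey algebra $A_{G,b}$, as you propose, does not give you a symmetric algebra to lean on, so you would in any case need to pass to the source algebra picture.

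Two further gaps. First, the permeability assertion is unsupported: you write that it ``is to be read off'' from the Mackey-functorial origin of the bimodule, but the paper actually has to prove (in the last part of Theorem~\ref{thm2}) that the induced Morita equivalence restricts to an equivalence $\add(X)\cong\add(Y)$ between explicit $p$-permutation module categories --- that is where permeability comes from, and it is a nontrivial part of the argument. Second, the derived-category half is sketched far too loosely. The paper's Theorem~\ref{thm3} has to lift contractible summands (Lemma~\ref{contractible-lift}), characterise the projective-injective complexes homotopically (Lemma~\ref{projinj-bounded}), and then carefully show that the truncated complexes $eMf$, $fNe$ give a bimodule chain map that is a quasi-isomorphism restricting to one-sided homotopy equivalences; as Remark~\ref{rem-b} points out, one does \emph{not} get a Rickard equivalence between $A$ and $B$, only a derived equivalence. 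Replacing ``$\add$'' by ``thick subcategory'' does not capture any of this and, if taken literally, would overclaim.
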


If $\CO$ has characteristic zero, we obtain a converse to
\cite[Proposition 4.5]{Rogn}.

\begin{Corollary} \label{cor1}
Suppose that $\CO$ has characteristic zero. The abelian categories
$\coMack(G,b)$ and $\coMack(H,c)$ are equivalent if and only if $b$ and 
$c$ are splendidly Morita equivalent.
\end{Corollary}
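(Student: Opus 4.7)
The reverse implication is established in \cite[Proposition 4.5]{Rogn}, so the content of the corollary lies entirely in the forward implication. Assuming $\coMack(G,b)$ and $\coMack(H,c)$ are equivalent, Theorem \ref{thm1} furnishes a permeable Morita equivalence between $\OGb$ and $\OHc$, realised by an $(\OGb,\OHc)$-bimodule $M$. The remaining task is to show that, when $\CO$ has characteristic zero, $M$ can be chosen to be a $p$-permutation bimodule, so that the Morita equivalence is splendid.

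The plan is to extract a $p$-permutation structure from the permeability of $M$ in three steps. First, reduce modulo $J(\CO)$ to obtain a permeable Morita equivalence $\bar{M} = k \tenO M$ between $kGb$ and $kHc$; permeability then gives, for the relevant $p$-subgroups of $G \times H^\op$, a description of the Brauer quotients of $\bar{M}$ in terms of the local source algebras of $b$ and $c$. Second, use these descriptions to identify $\bar{M}$ as a $p$-permutation $(kGb,kHc)$-bimodule, invoking the characterisation of $p$-permutation modules via the behaviour of their Brauer quotients. Third, lift $\bar{M}$ back to a $p$-permutation $(\OGb,\OHc)$-bimodule via the standard lifting theorem for $p$-permutation modules over complete local rings, and verify that the lift still implements a Morita equivalence; the characteristic zero hypothesis enters here to guarantee that Morita bimodules between block algebras over $\CO$ are determined up to isomorphism by their reductions modulo $J(\CO)$.

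The main obstacle will be the second step, the identification of $\bar{M}$ as a $p$-permutation bimodule. Over $k$ there is in principle an entire Picard group of self-equivalences (including endotrivial twists) that could deform the expected splendid bimodule into a non-$p$-permutation one, and one must show that permeability excludes all such twists. The characteristic zero hypothesis should play its role at precisely this point, using character-theoretic rigidity to rule out the exotic twists that are consistent with the Brauer quotient data but incompatible with the original equivalence at the level of cohomological Mackey functors.
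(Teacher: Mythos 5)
The overall skeleton matches the paper: reverse implication from Rognerud's Proposition 4.5, forward implication via Theorem~\ref{thm1} to get a permeable Morita equivalence, and then the real content is showing that a permeable Morita equivalence over $\CO$ of characteristic zero is splendid (Proposition~\ref{permeablezero} in the paper). However, your proposed route through this last step has a genuine gap, and the place it breaks is exactly where you flag an ``obstacle.''

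Your step 2 is fatally flawed as a strategy. You propose to reduce $M$ modulo $J(\CO)$ and then show that $\bar M = k \tenO M$ is a $p$-permutation $(kGb,kHc)$-bimodule by examining its Brauer quotients. But the paper explicitly notes (citing Remark 4.7 of Rognerud) that a permeable Morita equivalence between block algebras \emph{over $k$} need not be splendid. So the information available after reduction mod $J(\CO)$ is simply insufficient: there exist permeable bimodules over $k$ that are not $p$-permutation, and nothing in the Brauer quotient data of $\bar M$ alone distinguishes the good case. You sense this (``exotic twists that are consistent with the Brauer quotient data'') and wave at ``character-theoretic rigidity,'' but that phrase is doing no work; there is no argument there, and it is unclear how character theory would be applied to a bimodule over $k$ after the characteristic-zero information has already been discarded.

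The correct and far more direct argument, which the paper gives, is to \emph{not} pass to $k$ at all, but to apply Weiss' criterion directly to $M$ as an $\CO(P \times P')$-module, where $P$, $P'$ are defect groups. Permeability gives two facts: $M$ is free as a right $\CO P'$-module (and as a left $\CO P$-module), and $M \ten_{\CO P'} \CO$, which by the freeness is identified with the fixed points $M^{P'}$, is a permutation $\CO P$-module. Weiss' criterion (Theorem 2 of Weiss, adapted to general $\CO$ in the appendix of Puig's book) applied to $P \times P'$ with normal subgroup $1 \times P'$ then says precisely that $M$ is a permutation $\CO(P\times P')$-module. The characteristic zero hypothesis is what makes Weiss' theorem available; it enters as a hypothesis of that theorem, not via any lifting or character-theoretic argument. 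Your step 3 (lifting a $p$-permutation module from $k$ to $\CO$) is also unnecessary and would not recover the lost information even if step 2 could be salvaged, since the lifted module would only be determined up to isomorphism as an $\CO$-module and you would still need to show it implements the \emph{same} Morita equivalence.
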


By a result of Scott \cite{Scottnotes} and Puig \cite{Puigbook}, 
two blocks are splendidly Morita equivalent if and only if they have
isomorphic source algebras. 
Theorem \ref{thm1} will be proved in Section \ref{thm1proof} as a 
consequence of the description of the category $\coMack(G,b)$ in terms 
of a source algebra of $b$ in \cite{LincoMack}, and the two theorems 
below. Corollary \ref{cor1} follows from this and Weiss' criterion 
\cite{Weiss}, implying that if $\chr(\CO)=0$, then a permeable Morita 
equivalence is splendid.

\begin{Theorem} \label{thm2}
Let $A$ and $B$ be symmetric $\CO$-algebras. Let $X$ be a finitely
generated $\CO$-free $A$-module and let $Y$ be a finitely
generated $\CO$-free $B$-module. Suppose that $A$ is isomorphic to
a direct summand of $X$ as an $A$-module, and that $B$ is 
isomorphic to a direct summand of $Y$ as a $B$-module. Set
$E=$ $\End_A(X)$ and $F=$ $\End_B(Y)$. 
A Morita equivalence between $E$ and $F$ induces a Morita equivalence 
between $A$ and $B$ which restricts to an equivalence between
$\add(X)$ and $\add(Y)$.
\end{Theorem}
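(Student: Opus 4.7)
My plan is to lift the equivalence $\Theta\colon \add(X)\simeq \add(Y)$ induced by the given Morita equivalence between $E$ and $F$ to a Morita equivalence between $A$ and $B$. Since $A$ is a direct summand of $X$, there is an idempotent $e \in E$ such that $Ee \cong \Hom_A(X,A)$ and $\End_E(Ee) \cong A$; the classical equivalence $\Hom_A(X,-)\colon \add(X) \simeq \proj(E)$, valid because $A \in \add(X)$, identifies the summand $A$ with $Ee$. An analogous idempotent $f \in F$ and equivalence $\add(Y) \simeq \proj(F)$ arise for the $B$-summand of $Y$. The given Morita equivalence $\Phi\colon \Mod(E) \simeq \Mod(F)$ restricts to an equivalence $\proj(E) \simeq \proj(F)$, and combining the three produces $\Theta\colon \add(X) \simeq \add(Y)$. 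Setting $N := \Theta(A)$, the naturality of $\Theta$ yields $\End_B(N)^{\op} \cong A$.

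The heart of the proof is showing that $N$ is a progenerator of $\Mod(B)$. Once this holds, $N$ acquires the structure of a $(B,A)$-bimodule via the identification $A \cong \End_B(N)^{\op}$, and the functor $N \otimes_A -\colon \Mod(A) \to \Mod(B)$ is the sought Morita equivalence $\Psi$. I would prove the progenerator property by establishing $N \in \add(B)$ and $B \in \add(N)$ separately. For $N \in \add(B)$, the key observation is that $A$ is projective-injective in $\Mod(A)$ because $A$ is symmetric, and I expect this character to be intrinsically reflected in the additive category $\add(X)$ and hence preserved by $\Theta$. Combined with the symmetry of $B$, so that projective-injective $B$-modules in $\add(Y)$ are exactly those in $\add(B)$, this forces $N \in \add(B)$. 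The condition $B \in \add(N)$ follows by applying the same argument to $\Theta^{-1}(B) \in \add(X)$, which shares this projective-injective character and hence lies in $\add(A)$; applying $\Theta$ then yields $B \in \add(N)$. The restriction of $\Psi$ to $\add(X)$ agreeing with $\Theta$ is a routine compatibility check on generators.

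The main obstacle is identifying the correct intrinsic categorical invariant of the ``projective-injective part'' of $\add(X)$, and verifying that it is preserved by the equivalence $\Theta$ coming from an arbitrary Morita equivalence $\Phi$. This is where the symmetric-algebra hypothesis on both $A$ and $B$ enters essentially. I anticipate the proof of this step will combine Auslander--Reiten--theoretic or bimodule-theoretic arguments with a careful analysis of how Morita equivalences interact with the idempotents associated to the distinguished summands $A \subset X$ and $B \subset Y$.
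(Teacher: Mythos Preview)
Your overall strategy matches the paper's: characterise $\add(A)$ inside $\add(X)$ by a property preserved under Morita equivalence, then restrict. But you explicitly leave the decisive step open, and your expectation that the invariant should be visible inside the abstract additive category $\add(X)$ is slightly misplaced. No such invariant exists there; the paper's invariant lives one level up, in $\Mod(E)$.

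Concretely: under the equivalence $\add(X)\simeq\proj(E)$, a summand $U$ of $X$ lies in $\add(A)$ if and only if the projective $E$-module $\Hom_A(U,X)$ is relatively $\CO$-injective (Proposition~\ref{EndXinj}, combined with Lemma~\ref{lemmaOinj3} to pass between $\CO$ and $k$). The substantive direction uses the symmetry of $A$: the natural isomorphism $\Hom_A(X,A)\cong X^*$ of right $E$-modules, together with the fact that $X^*$ is projective over $E^\op$ because $A$ is a summand of $X$ (Lemma~\ref{EXproj}), shows that $X\cong\Hom_A(A,X)$ is injective as an $E$-module. The converse, that injectivity of $\Hom_A(U,X)$ forces $U$ to be projective, is an elementary splitting argument (Lemma~\ref{HomXMinj}) requiring no symmetry. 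Since any Morita equivalence $\Mod(E)\simeq\Mod(F)$ preserves relative $\CO$-injectivity of projectives, it carries $\add(Ee)$ into $\add(Ff)$ and, by the same reasoning with roles reversed, $\add(Ff)$ into $\add(Ee)$. From this point your progenerator argument is correct; the paper phrases the same conclusion via the truncated bimodules $eMf$ and $fNe$ and Lemma~\ref{eMf}, but the content is identical.
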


A {\it Rickard equivalence} between two algebras $A$ and $B$ consists
of a  bounded complex $X$ of $A$-$B$-bimodules and a bounded
complex $Y$ of $B$-$A$-bimodules such that the terms of $X$, $Y$ are
finitely generated projective as left and right modules, such that we 
have homotopy equivalences $X\tenB Y \simeq A$ in the homotopy category 
$K(A\tenO A^\op)$ of
finitely generated $A$-$A$-bimodules and $Y\tenA X\simeq$ $B$ in
the corresponding homotopy category $K(B\tenO B^\op)$. A Rickard 
equivalence induces a derived equivalence between $A$ and $B$. 

\begin{Theorem} \label{thm3}
Let $A$ and $B$ be symmetric $\CO$-algebras. Let $X$ be a finitely
generated $\CO$-free $A$-module and let $Y$ be a finitely
generated $\CO$-free $B$-module. Suppose that $A$ is isomorphic to
a direct summand of $X$ as an $A$-module, and that $B$ is 
isomorphic to a direct summand of $Y$ as a $B$-module. Set
$E=$ $\End_A(X)$ and $F=$ $\End_B(Y)$. 
A Rickard equivalence between $E$ and $F$ induces a derived 
equivalence between $A$ and $B$.
\end{Theorem}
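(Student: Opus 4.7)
My plan is to mimic the Morita argument of Theorem \ref{thm2} at the derived level, producing a Rickard bimodule complex between $A$ and $B$ from the given Rickard bimodule complex between $E$ and $F$ by passing to the corner algebras $eEe\cong A^{\op}$ and $fFf\cong B^{\op}$, where $e\in E$ is the idempotent with $Xe\cong A$ and $f\in F$ the idempotent with $Yf\cong B$ (both supplied by the summand hypotheses on $X$ and $Y$).

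Let $M$ (resp.\ $N$) denote the complex of $E$-$F$-bimodules (resp.\ $F$-$E$-bimodules) realising the given Rickard equivalence, so that $M\otimes_{F}^{L}N\simeq E$ and $N\otimes_{E}^{L}M\simeq F$. I would define the candidate $A$-$B$- and $B$-$A$-bimodule complexes
$$\widetilde{M}:=eMf, \qquad \widetilde{N}:=fNe,$$
viewing $eEe$ as $A^{\op}$ and $fFf$ as $B^{\op}$. Each term of $\widetilde{M}$ and $\widetilde{N}$ is a finitely generated bimodule, projective on each side, inherited from the corresponding property of the terms of $M$ and $N$. The Rickard hypothesis applied at the corner gives $eM\otimes_{F}^{L}Ne\simeq eEe=A$ immediately, so to verify the Rickard conditions for $A$ and $B$ everything reduces to proving the comparison
$$eMf\otimes_{fFf}^{L}fNe\;\simeq\;eM\otimes_{F}^{L}Ne$$
in $\CD^{b}(A\otimes_{\CO}A^{\op})$, equivalently the derived-tensor vanishing $eM\otimes_{F}^{L}(F/FfF)\otimes_{F}^{L}Ne\simeq 0$, together with the analogous statement on the other side.

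The main obstacle is exactly this comparison. In general $f$ is not a full idempotent in $F$ (so $FfF\subsetneq F$), hence $fFf$ and $F$ are not Morita equivalent and the two derived tensors genuinely differ. Overcoming this requires showing that the Rickard bimodule $M$ intertwines the corner-idempotent structures of $E$ and $F$ in a way forced by the endomorphism-algebra origin of $E=\End_{A}(X)$ and $F=\End_{B}(Y)$. I expect this to follow from the symmetric-algebra structures of $A$ and $B$, through the Frobenius identifications $\Hom_{A}(X,A)\cong\Hom_{\CO}(X,\CO)$ and $\Hom_{B}(Y,B)\cong\Hom_{\CO}(Y,\CO)$ as $(E,A)$- and $(F,B)$-bimodules respectively, combined with a chain-level application of the Morita version of this comparison (which is the content of Theorem \ref{thm2}), assembled through the bimodule-projectivity of each term of $M$ and $N$. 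Once this comparison is settled, the symmetric verification yields the desired Rickard---and hence derived---equivalence between $A$ and $B$.
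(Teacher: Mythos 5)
Your overall strategy---pass to the corner algebras $eEe\cong A^{\op}$, $fFf\cong B^{\op}$ and cut down the Rickard complexes to $eMf$, $fNe$---is the same as the paper's, and you correctly identify the central comparison $eMf\otimes_{fFf}^{L}fNe\simeq eM\otimes_{F}^{L}Ne$ as the crux. However, the proposal has three genuine gaps, and the last one means your intended conclusion is stronger than what can actually be extracted this way.

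First, the claim that each term of $eMf$ and $fNe$ is ``projective on each side, inherited from'' the corresponding property of $M$ and $N$ is false: if $M^n$ is a projective left $E$-module, $eM^n$ need not be projective over $eEe$ unless $M^n$ lies in $\add(Ee)$, and there is no reason a priori for the terms of $M$ to do so. The paper explicitly records in Remark \ref{rem-b} that the proof does \emph{not} establish that $eMf$, $fNe$ have projective terms on each side. Second, the comparison isomorphism is the heart of the matter, and your sketch (``Frobenius identifications plus a chain-level version of Theorem \ref{thm2}'') does not actually furnish a proof. The paper's route is quite specific: Proposition \ref{EndXinj} identifies $\add(Ee)$ as exactly the projective-injective $E$-modules, so $Ee$ is injective; combining this with Lemma \ref{projinj-bounded} (a triangulated-category argument using an injective resolution) and Lemma \ref{contractible-lift} (to descend from $k$ to $\CO$) produces the key decomposition $Ne\cong N_0\oplus N_1$, where $N_1$ is contractible and $N_0$ has components in $\add(Ff)$. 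It is this decomposition that makes Lemma \ref{eMf} applicable to the $N_0$-part while the $N_1$-part is killed at the level of quasi-isomorphism; without it, the comparison doesn't go through. Third, and consequentially: because $eMf$, $fNe$ are not shown to be one-sided projective, one does \emph{not} obtain a Rickard equivalence between $A$ and $B$, only that $eMf\otimes_{fFf}-$ and $fNe\otimes_{eEe}-$ preserve quasi-isomorphisms and induce inverse equivalences of derived categories. Your final sentence claims a Rickard equivalence; the theorem, and its proof, only yield a derived equivalence, and the paper is careful to say so.
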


Since the categories of cohomological Mackey functors in the statement
of Theorem \ref{thm1} are equivalent to the module categories of the 
corresponding Mackey algebras, the notion of Rickard equivalences 
extends in the obvious way to categories of cohomological Mackey 
functors.
By results of Rickard in \cite{RickDer} and \cite{Ricksplendid}, 
if two symmetric $\CO$-algebras are derived equivalent, then
they are Rickard equivalent, but such a Rickard equivalence may not
be related in an obvious way to a given derived equivalence.
This is essentially the reason why the conclusions in the theorems
above are formulated in terms of derived equivalences rather than 
Rickard equivalences. 

\bigskip\noindent {\bf Notation.}
For $A$ an algebra, we denote by $A^\op$ the opposite algebra.
An $A$-module is a unital left module, unless stated
otherwise. We denote by $\mod(A)$ the category of finitely
generated $A$-modules, and we identify $\mod(A^\op)$ with the category
of finitely generated unital right $A$-modules. 
For $U$ a finitely generated $A$-module, we denote by $\add(U)$ the
full subcategory of $\mod(A)$ consisting of all modules which are 
isomorphic to finite direct sums of summands of $U$. 
We denote by $\Ch(A)$ the category of chain 
complexes of finitely generated $A$-modules, and by $K(A)$ the 
corresponding homotopy category.

\begin{Remark}
For the purpose of proving Theorems \ref{thm2} and \ref{thm3} it would 
be sufficient to require that every projective indecomposable $A$-module 
is isomorphic to a direct summand of $X$, or equivalently, that
the category $\proj(A)$ of finitely generated projective $A$-modules
is contained in the category $\add(X)$. Since $A$ is symmetric, this
condition is equivalent to $X$ having a generator and a cogenerator
as a direct summand. This is the condition which appears for instance
in work of Auslander \cite{Aus}, introducing the notion of representation
dimension, and subsequently in work of Iyama \cite{Iyama},
where the finiteness of the representation dimension of Artin algebras
is proved. 
\end{Remark}

\begin{Remark}
The theme of extending/restricting derived equivalences 
between two finite-dimensional algebras over a field to/from suitable 
idempotent condensations appears in the work of many authors, 
such as  \cite{FaHuKo} and \cite{HuXi}. 
As pointed out by the referee, it would certainly be 
interesting to explore the possibility of common generalisations of 
the results of the present paper and some of the results in the above 
mentioned references. There is some overlap with methods used in 
\cite{MaVi} and \cite{FaHuKo}; we mention this in the Remarks 
\ref{rem-a} and \ref{rem-b} below.
\end{Remark}

It is tempting to speculate, whether the above theorems 
might possibly be of some use towards Brou\'e's abelian defect
conjecture, by playing the conjecture back to a question of
derived equivalences between certain endomorphism algebras with
interesting structural properties. Just to give an example of 
what we have in mind here, one could ask whether - at least
in some special cases - the derived equivalence between blocks 
with abelian defect groups predicted by Brou\'e's abelian defect 
conjecture become more accessible for the quasi-hereditary 
covers (in the sense of Rouquier) of the block algebras. 

\section{On relatively $\CO$-injective modules}

Let $A$ be an $\CO$-algebra. Suppose that $A$ is free of finite rank
as an $\CO$-module. Let $U$ be a finitely generated left $A$-module.
The module $U$ is called {\it relatively $\CO$-projective} if $U$ is
isomorphic to a direct summand of $A\tenO V$ for some $\CO$-module $V$.
Thus $U$ is projective if and only if $U$ is relatively $\CO$-projective
and $\CO$-free. If $U$ is indecomposable and relatively $\CO$-projective,
then $U$ is isomorphic to a direct summand of either $A$ or $A/\pi^n A$
for some positive integer $n$, because an indecomposable
$\CO$-module is isomorphic to either $\CO$ or $\CO/\pi^n\CO$ for some
positive integer $n$.

Dually, $U$ is called {\it relatively $\CO$-injective}
if $U$ is isomorphic to a direct summand of $\Hom_\CO(A,V)$ for some
$\CO$-module $V$, where the left $A$-module structure on $\Hom_\CO(A,V)$
is given by $(b\cdot\varphi)(a)=$ $\varphi(ab)$ for all $a$, $b\in$ $A$
and $\varphi\in$ $\Hom_\CO(A,V)$. As before, if $U$ is indecomposable
relatively $\CO$-injective, then $U$ is isomorphic to a direct summand
of either $A^*=$ $\Hom_A(A,\CO)$ or of $\Hom_\CO(A,\CO/\pi^n\CO)$ for
some positive integer $n$. Note that for $n=1$ this yields the
injective $k\tenO A$-modules. It is well-known that if $U$ is $\CO$-free,
then $U$ is relatively $\CO$-injective if and only if $k\tenO U$ is 
injective as a $k\tenO A$-module; we include short proofs of this and 
related facts for the convenience of the reader.

\begin{Lemma} \label{lemmaOinj1}
Let $A$ be an $\CO$-algebra which is free of finite rank as an
$\CO$-module, and let $U$ be an $A$-module which is free of finite
rank as an $\CO$-module. Then $U$ is projective if and only if
$k\tenO U$ is a projective $k\tenO A$-module.
\end{Lemma}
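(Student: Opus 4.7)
The forward direction is immediate: if $U$ is projective over $A$, then $U$ is isomorphic to a direct summand of some $A^n$, and applying $k\tenO -$ realises $k\tenO U$ as a direct summand of $(k\tenO A)^n$, hence projective over $k\tenO A$. The substance of the lemma lies in the converse, which I would prove via projective covers.

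First I would observe that $A$ is semiperfect. Since $\pi \in \rad(\CO)$ is central in $A$, we have $\pi A \subseteq \rad(A)$, and since $A/\pi A = k\tenO A$ is finite-dimensional over $k$, hence artinian, the $\pi$-adic and $\rad(A)$-adic topologies on $A$ coincide; completeness of $\CO$ then gives completeness of $A$ in the $\rad(A)$-adic topology. Consequently every finitely generated $A$-module has a projective cover, and in addition the canonical surjection $A\to A/\pi A=k\tenO A$ induces an identification $\rad(A)/\pi A=\rad(k\tenO A)$.

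Let $\varpi\colon P\twoheadrightarrow U$ be a projective cover in $\mod(A)$. The plan is to show that $k\tenO\varpi$ is a projective cover in $\mod(k\tenO A)$. Since $P$ is projective, and since $U$ is $\CO$-free, the kernel $K=\ker(\varpi)$ is $\CO$-pure in $P$, so $k\tenO K\hookrightarrow k\tenO P$ and identifies with $\ker(k\tenO\varpi)$. Because $K\subseteq\rad(A)P$ and $\pi P\subseteq\rad(A)P$, the image of $k\tenO K$ in $k\tenO P$ lies in $\rad(A)P/\pi P=\rad(k\tenO A)\cdot k\tenO P=\rad(k\tenO P)$, which is precisely what is needed for a projective cover. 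Granted the hypothesis that $k\tenO U$ is already projective, its projective cover is the identity, whence $k\tenO P\cong k\tenO U$.

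Finally I would conclude by a rank count. As $P$ is a direct summand of some $A^n$ and $A$ is $\CO$-free, $P$ is $\CO$-free, so $\rk_\CO(P)=\dim_k(k\tenO P)=\dim_k(k\tenO U)=\rk_\CO(U)$. The short exact sequence $0\to K\to P\to U\to 0$ is $\CO$-split because $U$ is $\CO$-free, so $K$ is $\CO$-free of rank $0$, hence $K=0$ and $\varpi$ is an isomorphism, making $U$ projective. The main obstacle, or at any rate the only subtle step, is the verification that reduction modulo $\pi$ sends projective covers to projective covers; this is where the specific hypothesis that $\pi$ is central and in $\rad(A)$ is used in an essential way, via the identification $\rad(A)/\pi A=\rad(k\tenO A)$. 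Once this is secured, the remainder is a straightforward purity and rank argument.
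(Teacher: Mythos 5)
Your proof is correct, and the converse direction follows a genuinely different route from the paper's, though both rest on the same underlying completeness facts. The paper argues directly: since $k\tenO U$ is a projective $k\tenO A$-module, lifting of idempotents over the complete ring $\CO$ produces a projective $A$-module $P$ with $k\tenO P\cong k\tenO U$; projectivity of $P$ then lets one lift the composite $P\to k\tenO P\cong k\tenO U$ along $U\twoheadrightarrow k\tenO U$ to a map $P\to U$, which is surjective by Nakayama, and the rank count finishes. You instead take a projective cover $P\twoheadrightarrow U$ in $\mod(A)$ (exists because $A$ is semiperfect), prove the non-trivial auxiliary fact that reduction mod $\pi$ carries projective covers to projective covers (using $\CO$-purity of the kernel and $\rad(A)/\pi A=\rad(k\tenO A)$), and conclude $k\tenO P\cong k\tenO U$ before running the same rank count. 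Your route is more machinery-heavy but establishes a reusable lemma about projective covers under base change; the paper's is shorter because it never needs to control the kernel before the final step, only to observe that a lifted map between two $\CO$-free modules of equal rank which is surjective must be bijective. Both arguments are sound; the purity and radical computations in your middle step are correct.
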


\begin{proof}
If $U$ is projective, then $k\tenO U$ is obviously projective as a
$k\tenO A$-module. Suppose conversely that $k\tenO U$ is a projective
$k\tenO A$-module. Then there is a projective $A$-module $P$ such that
$k\tenO P\cong$ $k\tenO U$ as $A$-modules. Since $P$ is projective,
it follows that the obvious map $P\to$ $k\tenO P\cong$ $k\tenO U$
lifts to a map $P\to$ $U$. This map is surjective by Nakayama's
lemma (we use here that $U$ is finitely generated as an $\CO$-module).
Since both $P$ and $U$ are $\CO$-free of the same rank (equal to
the dimension of $k\tenO U$), it follows that the map $P\to$ $U$
obtained in this way is an isomorphism.
\end{proof}

\begin{Lemma} \label{lemmaOinj2}
Let $A$ be an $\CO$-algebra which is free of finite rank as an
$\CO$-module, and let $U$ be an $A$-module which is free of finite
rank as an $\CO$-module. Then $U$ is relatively $\CO$-injective if
and only if the right $A$-module $U^*=$ $\Hom_\CO(U,\CO)$ is projective.
\end{Lemma}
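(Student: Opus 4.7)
The plan is to transport relative $\CO$-injectivity for $U$ to projectivity for $U^*$ via the $\CO$-duality functor $(-)^{*} = \Hom_\CO(-,\CO)$. Since $\CO$ is a complete local principal ideal domain, this functor restricts to a contravariant equivalence between finitely generated $\CO$-free left $A$-modules and finitely generated $\CO$-free right $A$-modules, and for $U$ as in the statement one has a canonical isomorphism $U^{**} \cong U$. A short unwinding of the left action $(b\cdot\varphi)(a) = \varphi(ab)$ on $A^* = \Hom_\CO(A,\CO)$ shows that under this duality $A^*$ corresponds to the regular right $A$-module $A$, so more generally $(A^*)^n$ dualises to $A^n$ and conversely.

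With this in place, the core claim I would prove is that, for $U$ finitely generated and $\CO$-free, $U$ is relatively $\CO$-injective if and only if $U$ is isomorphic to a direct summand of $(A^*)^n$ for some positive integer $n$. The forward implication uses the hypothesis $U \oplus U' \cong \Hom_\CO(A,V)$ for some $\CO$-module $V$. Writing $V = F \oplus T$ with $F$ free and $T$ torsion (available since $\CO$ is a PID), one gets $\Hom_\CO(A,V) = \Hom_\CO(A,F) \oplus \Hom_\CO(A,T)$. Every $A$-homomorphism from the $\CO$-free module $U$ into the $\CO$-torsion module $\Hom_\CO(A,T)$ vanishes, so $U$ is a summand of $\Hom_\CO(A,F)$. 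Because $A$ is $\CO$-free of finite rank, $\Hom_\CO(A,F) \cong \bigoplus_{i\in I} A^*$ where $I$ indexes an $\CO$-basis of $F$; since $U$ is finitely generated as an $A$-module, it lives in and is split off from a finite sub-sum $(A^*)^n$. The reverse implication of the intermediate claim is immediate from $(A^*)^n \cong \Hom_\CO(A,\CO^n)$.

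Given the intermediate claim, both directions of the lemma follow. If $U$ is relatively $\CO$-injective, then $U$ is a summand of $(A^*)^n$, so $U^*$ is a summand of $A^n$ as a right $A$-module, hence projective. Conversely, if $U^*$ is projective, then $U^* \oplus W \cong A^n$ for some right $A$-module $W$; dualising and invoking $U^{**}\cong U$ exhibits $U$ as a summand of $(A^*)^n \cong \Hom_\CO(A,\CO^n)$, so $U$ is relatively $\CO$-injective.

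The main obstacle I anticipate is the reduction in the forward direction to a finite free $\CO$-module in place of the arbitrary $V$: one has to combine the PID structure theorem with a finite-support argument to go from a possibly uncountable direct sum of copies of $A^*$ down to $(A^*)^n$, and to verify that a $\CO$-free summand of a module split as $\CO$-free plus $\CO$-torsion actually lands in the $\CO$-free part as a direct summand. The identification of $A^{**}$ with $A$ as right $A$-modules, and the fact that $\CO$-duality preserves direct summands on the $\CO$-free category, are routine.
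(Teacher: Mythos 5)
Your approach is genuinely different from the paper's: the paper reduces to $U$ indecomposable and then appeals to the classification stated just before Lemma~\ref{lemmaOinj1} (an indecomposable relatively $\CO$-injective module is a summand of $A^*$ or of $\Hom_\CO(A,\CO/\pi^n\CO)$, and the second case is excluded when $U$ is $\CO$-free), whereas you prove directly that a finitely generated $\CO$-free $U$ is relatively $\CO$-injective if and only if it is a summand of $(A^*)^n$. Your dualisation step and the reverse implication are fine, but the forward direction of your intermediate claim contains two errors.

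First, for an arbitrary $\CO$-module $V$ the splitting $V = F \oplus T$ with $F$ free and $T$ torsion does not exist: the structure theorem over a PID applies to finitely generated modules only, and for instance the fraction field of $\CO$ is a torsion-free $\CO$-module which is not free, so it admits no such decomposition. Second, the claim that every $A$-homomorphism from the $\CO$-free module $U$ into the $\CO$-torsion module $\Hom_\CO(A,T)$ vanishes is false as stated; the quotient map $\CO \to \CO/\pi\CO$ is a nonzero $\CO$-homomorphism from a free module to a torsion module. What is true, and what your argument actually needs, is the opposite direction: the component of the retraction mapping the torsion part $\Hom_\CO(A,T)$ to the torsion-free module $U$ is zero, since it goes from torsion to torsion-free. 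Writing the splitting $U \hookrightarrow P \oplus Q \twoheadrightarrow U$ (with $P$ the free part and $Q$ the torsion part) in matrix form, the entry $Q \to U$ vanishes, so the composite through $P$ is the identity on $U$. The cleanest way to repair both points at once is to use the standard characterisation that $U$ is relatively $\CO$-injective if and only if the adjunction unit $U \to \Hom_\CO(A,U)$, $u \mapsto (a \mapsto au)$, is split; this allows you to take $V = U$, which is finitely generated and $\CO$-free by hypothesis, so that $\Hom_\CO(A,U) \cong (A^*)^n$ with $n = \rank_\CO U$ immediately, bypassing the need for any decomposition of $V$. With these corrections your argument is sound.
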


\begin{proof}
Since duality preserves finite direct sums and since $U$ is
projective (resp. relatively $\CO$-injective) if and only if all
direct summands of $U$ have the same property, we may assume that
$U$ is indecomposable. Since also $U$ is $\CO$-free, it follows that
$U$ is relatively  $\CO$-injective if and only if $U$ is isomorphic to
a direct summand of $A^*=$ $\Hom_\CO(A,\CO)$. Applying $\CO$-duality,
this is the case if and only if $U^*$ is isomorphic to a direct summand
of $A^{**}\cong$ $A$ as a right $A$-module, hence if and only if $U^*$
is projective as a right $A$-module.
\end{proof}

\begin{Lemma} \label{lemmaOinj3}
Let $A$ be an $\CO$-algebra which is free of finite rank as an
$\CO$-module, and let $U$ be an $A$-module which is free of finite
rank as an $\CO$-module. Then $U$ is relatively $\CO$-injective if
and only if $k\tenO U$ is an injective $k\tenO A$-module.
\end{Lemma}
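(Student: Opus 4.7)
The plan is to reduce this statement to Lemmas \ref{lemmaOinj1} and \ref{lemmaOinj2} combined with the standard $k$-duality between projective right modules and injective left modules over a finite-dimensional $k$-algebra.

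First I would apply Lemma \ref{lemmaOinj2} to replace the relative $\CO$-injectivity of $U$ with the statement that $U^*=\Hom_\CO(U,\CO)$ is projective as a right $A$-module. Since $U$ is $\CO$-free of finite rank, so is $U^*$, so Lemma \ref{lemmaOinj1} applied to the $\CO$-algebra $A^\op$ (which is also $\CO$-free of finite rank) and the $A^\op$-module $U^*$ yields that $U^*$ is projective as a right $A$-module if and only if $k\tenO U^*$ is projective as a right $k\tenO A$-module.

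Next I would use the canonical isomorphism
\[
k\tenO U^* \;=\; k\tenO \Hom_\CO(U,\CO) \;\cong\; \Hom_k(k\tenO U,k) \;=\; (k\tenO U)^*,
\]
which is an isomorphism of right $k\tenO A$-modules and is valid because $U$ is $\CO$-free of finite rank. Thus the condition becomes: $(k\tenO U)^*$ is projective as a right $k\tenO A$-module.

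Finally, I would invoke the standard fact that for any finite-dimensional $k$-algebra $R$, the $k$-linear duality $V\mapsto \Hom_k(V,k)$ restricts to a contravariant equivalence between the categories of finitely generated left and right $R$-modules, and exchanges projectives with injectives; applied to $R=k\tenO A$ and $V=k\tenO U$, this gives that $(k\tenO U)^*$ is projective as right $k\tenO A$-module if and only if $k\tenO U$ is injective as left $k\tenO A$-module, completing the chain of equivalences. The only mildly delicate point is the base-change identification of the two duals, which is routine for $\CO$-free $U$ of finite rank; no substantive obstacle remains beyond citing the standard $k$-duality.
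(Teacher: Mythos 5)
Your proof is correct and follows essentially the same route as the paper: the paper's converse direction runs exactly through your chain (reduce to projectivity of $U^*$ via Lemma~\ref{lemmaOinj2}, descend to $k\tenO A$ via the right-module version of Lemma~\ref{lemmaOinj1}, and use the base-change identification $k\tenO U^*\cong(k\tenO U)^*$ together with $k$-duality), while the paper's forward direction takes the small shortcut of reducing to $U$ indecomposable and observing directly that a summand of $A^*$ base-changes to a summand of $(k\tenO A)^*$. Your presentation as a single bidirectional chain of equivalences is slightly more uniform but mathematically the same.
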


\begin{proof}
We may assume that $U$ is indecomposable.
If $U$ is relatively $\CO$-injective, then $U$ is isomorphic to a
direct summand of $A^*=$ $\Hom_\CO(A,\CO)$. Thus $k\tenO U$ is
isomorphic to a direct summand of $(k\tenO A)^*=$ $\Hom_k(k\tenO A,k)$,
and hence $k\tenO U$ is injective as a $k\tenO A$-module.
Suppose conversely that $k\tenO U$ is injective as a $k\tenO A$-module.
Then $k\tenO U$ is isomorphic to a direct summand of the $k$-dual
$(k\tenO A)^*$ of $k\tenO A$. Thus the $k$-dual $(k\tenO U^*)$ of
$k\tenO U$ is isomorphic to a direct summand of the regular right
$k\tenO A$-module $k\tenO A$, hence is projective as a
right $k\tenO A$-module. By the obvious version of Lemma \ref{lemmaOinj1}
for right modules, the $\CO$-dual $U^*$ of $U$ is a projective right
$A$-module. Thus $U$ is relatively $\CO$-injective, by Lemma
\ref{lemmaOinj2}.
\end{proof}

Let $A$ be a finite-dimensional $k$-algebra. The $k$-dual $U^*$ of
a right $A$-module $U$ is then a left $A$-module, and for any
idempotent $e$ in $A$, we have a canonical isomorphism
$(Ue)^*\cong$ $e(U^*)$ of left $eAe$-modules. This isomorphism is
induced by restricting $k$-linear maps $U\to$ $k$ to $Ue$.
It can be regarded as a special case of an adjunction isomorphism: 
since $Ue\cong$ $U\tenA Ae$, we have a natural isomorphism 
$\Hom_k(U\tenA Ae,k)\cong$
$\Hom_A(Ae, \Hom_k(U,k))$. The left side is $(Ue)^*$, and the right
side is $\Hom_A(Ae,U^*)\cong $ $e(U^*)$. Applied to $U=$ $eA$
this yields an isomorphism of left $eAe$-modules $e((eA)^*)\cong$
$(eAe)^*$. We use this in the proof of the following lemma.

\begin{Lemma} \label{AiAinj}
Let $A$ be a finite-dimensional $k$-algebra. Suppose that  for any
primitive idempotent $i$ in $A$ the left $A$-module $Ai$ is injective
if and only if the right $A$-module $iA$ is injective. Let $J$ be a set
of pairwise orthogonal representatives of the conjugacy classes of 
primitive idempotents $j$ with the property that $Aj$ is injective. Set
$e=$ $\sum_{j\in J} j$. Then the $k$-algebra $eAe$ is selfinjective.
\end{Lemma}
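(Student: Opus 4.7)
The plan is to show $eAe$ is self-injective by verifying that every indecomposable projective left $eAe$-module is injective. Since the elements of $J$ are pairwise orthogonal primitive idempotents of $A$ summing to $e$, each $j\in J$ is also primitive in $eAe$ (any decomposition in $eAe$ is one in $A$), and the indecomposable projective left $eAe$-modules are, up to isomorphism, precisely the modules $eAj$ with $j\in J$.

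First I would set up a Nakayama-type bijection on $A$-conjugacy classes of idempotents in $J$. For $j\in J$ the hypothesis guarantees that $jA$ is injective as a right $A$-module, so its $k$-dual $(jA)^*$ is a projective left $A$-module and hence $(jA)^*\cong A\sigma(j)$ for some primitive idempotent $\sigma(j)$ of $A$. The module $A\sigma(j)\cong (jA)^*$ is then injective as a left $A$-module, so by definition of $J$ we may take $\sigma(j)\in J$. The assignment $j\mapsto\sigma(j)$ is injective on conjugacy classes, because $A\sigma(j)\cong A\sigma(j')$ forces $jA\cong j'A$ by double duality; since $J$ has only finitely many $A$-conjugacy classes, $\sigma$ is a bijection on them.

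Next I would apply the adjunction $(Ue)^*\cong e(U^*)$ of left $eAe$-modules recorded just before the lemma, taking $U=jA$. This yields
\[ (jAe)^*\;\cong\;e\bigl((jA)^*\bigr)\;\cong\;eA\sigma(j). \]
Because $je=j$ and $j$ is primitive in $eAe$, the right $eAe$-module $jAe=j\cdot eAe$ is indecomposable projective, and so its $k$-dual $(jAe)^*$ is an indecomposable injective left $eAe$-module. Hence each $eA\sigma(j)$ is injective. Since $\sigma$ is a bijection on $A$-conjugacy classes inside $J$, and since $A$-conjugate primitive idempotents $\sigma(j)\sim j'$ produce isomorphic left $eAe$-modules $eA\sigma(j)\cong eAj'$ (by left multiplication along the conjugating unit, which commutes with left multiplication by $e$), every $eAj'$ with $j'\in J$ arises this way and is injective. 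Therefore the regular module $eAe=\bigoplus_{j'\in J}eAj'$ is injective, and $eAe$ is self-injective.

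The main care goes into checking that $\sigma$ lands inside $J$ rather than in the larger set of all primitive idempotents of $A$, and this is exactly where the hypothesis that $Ai$ is injective if and only if $iA$ is injective is used: it is needed both to conclude that $jA$ is injective for $j\in J$ (to start the argument) and to recognise $\sigma(j)$ as an element of $J$. Once this symmetry is in hand, the duality $(Ue)^*\cong e(U^*)$ does the rest of the work.
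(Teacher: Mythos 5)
Your proof is correct. It is closely related to the paper's argument, but you present it at the level of indecomposable summands, constructing an explicit Nakayama-type permutation $\sigma$ on the conjugacy classes in $J$ and applying the duality $(Ue)^*\cong e(U^*)$ separately to each $U=jA$ to conclude that each $eA\sigma(j)$ is injective. The paper instead works globally: it observes that $(eA)^*$ is projective-injective, that its indecomposable summands must therefore come from $Ae$, and then invokes a counting argument (both $Ae$ and $eA$ have $|J|$ pairwise nonisomorphic indecomposable summands) to obtain the single module isomorphism $(eA)^*\cong Ae$; multiplying once by $e$ then gives $(eAe)^*\cong eAe$. The two routes use the same two ingredients — the left-right symmetry hypothesis and the adjunction $(Ue)^*\cong e(U^*)$ — but the paper's version is shorter and avoids tracking the permutation explicitly, while your version makes the Nakayama permutation on $J$ visible, which can be a helpful conceptual byproduct (and is exactly the content of Remark \ref{rem-a} connecting this lemma to invariance under the Nakayama functor). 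One small point worth spelling out in your argument is that passing from $\sigma(j)$ to its conjugate representative $j''\in J$ gives $eA\sigma(j)\cong eAj''$ as left $eAe$-modules because right multiplication by the conjugating unit commutes with left multiplication by $e$; you gesture at this, and it is correct, but it is the one place where a reader might pause.
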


\begin{proof}
By the assumptions, the right $A$-module $eA$ is both projective and
injective. Thus its $k$-dual $(eA)^*=$ $\Hom_k(eA,k)$ is
a projective and injective left $A$-module. It follows that any
indecomposable direct summand of $(eA)^*$ is isomorphic to a
direct summand of $Ae$, by the choice of $e$. The indecomposable direct
summands of $Ae$ in any decomposition of $Ae$ are pairwise nonisomorphic.
Similarly for $eA$. Since $Ae$ and $eA$ have
the same number of indecomposable direct factors, it follows that
$(eA)^*\cong$ $Ae$ as left $A$-modules. Multiplying both modules
on the left by $e$ yields an isomorphism of left $eAe$-modules
$e(eA)^*\cong$ $eAe$. The left side is isomorphic to $(eAe)^*$,
and hence $eAe$ is injective as a left $eAe$-module as required.
\end{proof}

\begin{Remark} \label{rem-a}
Lemma \ref{AiAinj} can be proved as a consequence of
\cite[Theorem 1.5]{MaVi}. To see this, it suffices to show
that the class of projective-injective indecomposable $A$-modules
in Lemma \ref{AiAinj} is invariant under the Nakayama
functor $\nu$. If $i$ is a primitive idempotent
in $A$ such that the projective indecomposable $A$-module
$Ai$ is injective, then the right $A$-module $iA\cong$
$\Hom_A(Ai,A)$ is projective and injective by the assumptions
in the Lemma, and hence $\nu(Ai)=$ $(iA)^*$ is projective
and injective.
\end{Remark}

If a finite-dimensional $k$-algebra $A$ has the property that for any
primitive idempotent $i$ the left $A$-module $Ai$ is injective if and
only if the right $A$-module $iA$ is injective, then clearly any block
algebra of $A$ and any algebra Morita equivalent to $A$ inherit the
analogous property. This property applies to the Yoshida type
endomorphism algebras.
This will follow from some general considerations, based on the usual 
translation between direct summands of a module and projective modules 
over its endomorphism algebra.

\begin{Lemma} \label{HomXMinj}
Let $A$ be a finite-dimensional $k$-algebra, $X$ a finite-dimensional
left $A$-module, and set $E=$ $\End_A(X)$. Let $M$ be a direct
summand of $X$.

\smallskip\noindent (i)
If $\Hom_A(X,M)$ is injective as an $E^\op$-module, then
$M$ is injective as an $A$-module.

\smallskip\noindent (ii)
If $\Hom_A(M,X)$ is injective as an $E$-module, then $M$ is projective
as an $A$-module.
\end{Lemma}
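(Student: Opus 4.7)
The plan is to exploit the adjoint pair $(- \otimes_E X, \Hom_A(X, -))$ together with the Morita-like equivalence $\add(X) \simeq \proj(E^{\op})$ to transfer the injectivity hypothesis from $\mod(E^{\op})$ back to $\mod(A)$; part (ii) will then follow from part (i) by $k$-duality. First I would identify $\Hom_A(X, M)$ with the right $E$-module $eE$, where $e \in E$ is the idempotent projecting $X$ onto $M$, making $\Hom_A(X, M)$ automatically projective and, by hypothesis, also injective in $\mod(E^{\op})$. I would then embed $M$ into a power $(A^*)^n$ of the injective cogenerator of $\mod(A)$, and apply $\Hom_A(X, -)$---which preserves injectives as a right adjoint to $- \otimes_E X$---to obtain a monomorphism $\Hom_A(X, M) \hookrightarrow \Hom_A(X, (A^*)^n) \cong (X^*)^n$ of right $E$-modules with injective target. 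By the injectivity of $\Hom_A(X, M)$, this inclusion splits, giving a decomposition $(X^*)^n \cong \Hom_A(X, M) \oplus Q$ in $\mod(E^{\op})$.

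Next I would transfer this splitting back to $\mod(A)$ by applying $- \otimes_E X$. Under the assumption that $A$ is a direct summand of $X$ (in force in the setting of the applications), $X$ is projective as a left $E$-module since $X \cong E e_A$ for the idempotent $e_A \in E$ projecting $X$ onto $A$, so $- \otimes_E X$ is exact; and the double centralizer property $\End_E(X) \cong A^{\op}$ yields the identification $X^* \otimes_E X \cong A^*$ of $A$-modules. Combined with the counit isomorphism $\Hom_A(X, M) \otimes_E X \cong M$ (valid for $M \in \add(X)$), applying $- \otimes_E X$ to the splitting above produces $(A^*)^n \cong M \oplus (Q \otimes_E X)$ in $\mod(A)$, exhibiting $M$ as a direct summand of the injective $A$-module $(A^*)^n$; hence $M$ is injective, proving (i). Part (ii) follows by applying (i) to the $k$-dual datum $(A^{\op}, X^*, M^*)$: the endomorphism algebra of $X^*$ in $\mod(A^{\op})$ is $E^{\op}$, $M^*$ is a direct summand of $X^*$, and $\Hom_{A^{\op}}(X^*, M^*) \cong \Hom_A(M, X)$ converts the hypothesis of (ii) into the hypothesis of (i) for the dual setup, whose conclusion is equivalent to $M$ being projective.

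The main obstacle is the identification $X^* \otimes_E X \cong A^*$, which rests essentially on $A$ being a direct summand of $X$; without this hypothesis, the splitting obtained in $\mod(E^{\op})$ cannot be transferred back to $\mod(A)$ through $- \otimes_E X$, and the argument breaks down.
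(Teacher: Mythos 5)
Your proof is correct and takes a genuinely different route from the paper's. The paper argues directly: given a monomorphism $\iota\colon M\to N$, the induced map $\Hom_A(X,M)\to\Hom_A(X,N)$ splits by the injectivity hypothesis, and the splitting is "induced by an $A$-homomorphism $\pi\colon N\to M$" (the cited general abstract nonsense), whence $\pi\iota=\Id_M$. You instead embed $M$ into $(A^*)^n$, split the image of this embedding under $\Hom_A(X,-)$ inside $\mod(E^\op)$, and return to $\mod(A)$ by applying $-\otimes_E X$, using the identifications $\Hom_A(X,M)\otimes_E X\cong M$ and $X^*\otimes_E X\cong A^*$ to realise $M$ as a summand of $(A^*)^n$. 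This replaces the paper's Yoneda-type fullness step by explicit adjunction computations, at the price of needing $X^*\otimes_E X\cong A^*$.

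You are right that this last identification -- and in fact the paper's "abstract nonsense" step as well, which is precisely the claim that $\Hom_A(X,-)$ is full onto objects of $\add(X)$ -- depends on $A$ being a direct summand of $X$. This hypothesis is absent from the Lemma as stated, though it holds in every application (Proposition~\ref{EndXinj}, Proposition~\ref{source1}); without it the statement fails, e.g.\ for $A=k[x]/(x^2)$ and $X=M=k$ one has $E=k$ and $\Hom_A(X,M)=k$ injective over $E^\op$, yet $M=k$ is not injective over $A$. One caveat in your reduction of (ii) to (i): running your proof of (i) on the dual datum $(A^\op,X^*,M^*)$ requires $A^\op$ to be a direct summand of $X^*$, equivalently $A^*$ to be a direct summand of $X$, which is not a consequence of $A$ being a summand of $X$ for general $A$. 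It is automatic when $A$ is symmetric, as in the applications, but a direct dualization of your argument for (ii) -- choose a surjection $A^m\twoheadrightarrow M$, apply $\Hom_A(-,X)$, split the resulting monomorphism of $E$-modules, and apply $\Hom_E(-,X)$, using $\Hom_E(X,X)\cong A$ and $\Hom_E(\Hom_A(M,X),X)\cong M$ -- would need only $A$ a summand of $X$ and thus be on an equal footing with (i).
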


\begin{proof}
Suppose that $\Hom_A(X,M)$ is injective as an $E^\op$-module.
Let $\iota : M\to$ $N$ be an injective $A$-homomorphism. We need
to show that $\iota$ is split. The map $\eta : \Hom_A(X,M)\to$
$\Hom_A(X,N)$ sending $\varphi\in$ $\Hom_A(X,M)$ to $\iota\circ\varphi$
is an injective homomorphism of $E^\op$-modules. Thus $\eta$ is
split injective. Therefore there is an $E^\op$-homomorphism
$\epsilon :  \Hom_A(X,N)\to$ $\Hom_A(X,M)$ satisfying
$\epsilon\circ\eta=$ $\Id$, the identity on $\Hom_A(X,M)$.
By the usual general abstract nonsense, $\epsilon$ is induced
by an $A$-homomorphism $\pi : N\to$ $M$; that is, we have
$\epsilon(\psi)=$ $\pi\circ\psi$.  Since $\varphi=$
$\epsilon(\eta(\varphi))=$ $\pi\circ\iota\circ\varphi$ for all
$\varphi\in$ $\Hom_A(X,M)$. Applying this with $\varphi$ a projection
of $X$ onto $M$ implies that $\pi\circ\iota=$ $\Id_M$, and hence 
$\iota$ is split. Thus $M$ is an injective $A$-module. This proves (i).
The proof of (ii) is dual; we sketch the steps. Let $\pi : N\to$ $M$ be
a surjective $A$-homomorphism. Precomposing with $\pi$ induces an
injective homomorphism $\beta : \Hom_A(M,X)\to$ $\Hom_A(N,X)$, which
by the assumptions, is split injective. Any splitting of $\beta$ is
induced by precomposing with an $A$-homomorphism $M\to$ $N$, which is
then shown to be a section of $\pi$. This proves (ii).
\end{proof}

We will use the following elementary fact on tensor products of
finitely generated projective modules.

\begin{Lemma} \label{eMf}
Let $A$ be an $\CO$-algebra which is finitely generated as an
$\CO$-module. Let $e$ be an idempotent in $A$. Suppose that
$U$ is a projective left $A$-module which is a finite direct sum of 
direct summands of $Ae$, or that $V$ is a projective right 
$A$-module which is a finite direct sum of direct summands of $eA$.
The inclusions $eU\subseteq$ $U$ and $Ve\subseteq$ $V$ induce an
isomorphism $Ve\ten_{eAe} eU\cong$ $V\tenA U$.
\end{Lemma}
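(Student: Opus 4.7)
The plan is to reduce the statement, by additivity, to the case $U = Ae$. Since the hypothesis is left/right symmetric, I treat only the case in which $U$ is a finite direct sum of direct summands of $Ae$; the other case is analogous. The canonical map
\[
\mu_U : Ve \ten_{eAe} eU \longrightarrow V \tenA U,
\]
induced by the inclusions $Ve \subseteq V$ and $eU \subseteq U$, is natural in $U$. Both source and target are additive functors of $U$: for $V \tenA (-)$ this is standard, and for $Ve \ten_{eAe} e(-)$ it follows from the fact that $U \mapsto eU$ is an additive functor from $\mod(A)$ to $\mod(eAe)$ because $e$ is idempotent. Hence the class of $A$-modules $U$ for which $\mu_U$ is an isomorphism is closed under finite direct sums and direct summands, so it suffices to verify the lemma for $U = Ae$.

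When $U = Ae$, we have $eU = eAe$, and there are right-$eAe$-linear isomorphisms
\[
\rho : Ve \ten_{eAe} eAe \longrightarrow Ve, \qquad v' \ten x \mapsto v'x,
\]
given by the right unit action of $eAe$ on $Ve$, and
\[
\sigma : V \tenA Ae \longrightarrow Ve, \qquad v \ten ae \mapsto vae,
\]
the standard restriction isomorphism. For $v' \in Ve$ and $x \in eAe$ one has $\sigma(\mu_{Ae}(v' \ten x)) = \sigma(v' \ten x) = v'x = \rho(v' \ten x)$, so $\sigma \circ \mu_{Ae} = \rho$, and therefore $\mu_{Ae}$ is an isomorphism. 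The alternative hypothesis on $V$ is handled by the same argument with left and right swapped, reducing to $V = eA$, in which case $Ve = eAe$ and $V \tenA U \cong eU$ yield the conclusion directly.

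There is no serious obstacle here; the only point worth a moment of thought is the additivity of $U \mapsto Ve \ten_{eAe} eU$ together with the naturality of $\mu_U$ with respect to a direct sum decomposition of $U$. Both are immediate once one observes that $U \mapsto eU$ is an exact additive functor from $\mod(A)$ to $\mod(eAe)$ that commutes with finite direct sums and sends direct summands to direct summands.
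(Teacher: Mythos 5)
Your proof is correct and uses essentially the same argument as the paper: both exhibit the map as a natural transformation of additive (bi)functors, reduce by additivity to the generating case $U=Ae$ (or $V=eA$), and observe that the map becomes the canonical identification $Ve\ten_{eAe} eAe\cong Ve\cong V\tenA Ae$ there. You simply spell out the naturality and the unit isomorphisms a little more explicitly than the paper does.
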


\begin{proof}
The maps $Ve\ten_{eAe} eU\to$ $V\tenA U$ induced by the inclusions
$eU\subseteq$ $U$ and $Ve\subseteq$ $V$ are a natural transformation
from the bifunctor $(U,V)\mapsto$ $Ve\ten_{eAe} eU$ to the
bifunctor $(U,V)\mapsto$ $V\tenA U$. This natural transformation is
$\CO$-linear in both arguments, so it suffices to show that it
yields an isomorphism if $U=Ae$ or if $V=eA$. If $U=Ae$, then
$Ve \ten_{eAe} eU =$ $Ve \ten_{eAe} eAe \cong$ $Ve\cong$
$V\tenA Ae=$ $V\tenA U$. A similar argument shows that if $V=$ $eA$,
then we have an isomorphism $Ve\ten_{eAe} eU\cong$ $V\tenA U$,
proving the statement.
\end{proof}

\begin{Lemma} \label{contractible-lift}
Let $A$ be an $\CO$-algebra which is finitely generated as an
$\CO$-module.
Let $U$ be a bounded complex of finitely generated projective
$A$-modules. 
Suppose that $k\tenO U$ has a contractible direct summand $W$ as
a complex of $k\tenO A$-modules. Then $U$ has a contractible
direct summand $V$ such that $k\tenO V=$ $W$.
\end{Lemma}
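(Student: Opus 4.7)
The plan is to reformulate the existence of a contractible direct summand of $U$ as the existence of a null-homotopic idempotent chain endomorphism of $U$, and then lift such an idempotent from $k\tenO A$ to $A$ using $\pi$-adic completeness of $\CO$. First, observe that a direct summand $V$ of $U$ corresponds to an idempotent chain endomorphism $e$ of $U$ with $V = eU$, and $V$ is contractible if and only if $e$ is null-homotopic in $\End_{\Ch(A)}(U)$, i.e.\ $e = ds+sd$ for some graded $A$-linear $s:U\to U$ of degree $-1$. Indeed, if $e$ is null-homotopic via $s$, then $\tau := es$ restricts to a degree $-1$ map $V\to V$, and for $v\in V$ one has $(d\tau+\tau d)(v) = e(ds+sd)(v) = e^2(v) = v$, a null-homotopy of $\Id_V$; conversely a null-homotopy of $\Id_V$, extended by $0$ on a complement, yields a null-homotopy of $e$. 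Thus $W$ corresponds to a null-homotopic idempotent $\bar e \in \End_{\Ch(k\tenO A)}(k\tenO U)$, and we seek a null-homotopic idempotent $e \in R := \End_{\Ch(A)}(U)$ lifting $\bar e$.

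Next, $R$ is an $\CO$-subalgebra of the finite product $\prod_n \End_A(U^n)$, each factor of which is a summand of some $M_{k_n}(A)$ and hence finitely generated over $\CO$. Therefore $R$ itself is finitely generated as an $\CO$-module, $\pi$-adically complete, with $\pi R \subseteq \rad(R)$. Let $I \subseteq R$ be the $\CO$-submodule of null-homotopic chain endomorphisms, i.e.\ the image of the $\CO$-linear map $s \mapsto ds+sd$ from degree $-1$ graded endomorphisms of $U$ into $R$. The identities $f(ds+sd) = d(fs)+(fs)d$ and $(ds+sd)f = d(sf)+(sf)d$, valid for any chain map $f$, show that $I$ is a two-sided ideal of $R$; and since $R$ is a finitely generated module over the Noetherian complete local ring $\CO$, every $\CO$-submodule is closed in the $\pi$-adic topology, so $I$ is closed.

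Finally, I would lift a null-homotopy $\bar s$ of $\bar e$ degreewise to some $s$ of degree $-1$, using that $\Hom_A(U^n,U^{n-1}) \to \Hom_{k\tenO A}(k\tenO U^n, k\tenO U^{n-1})$ is surjective when both terms are finitely generated projective, and set $e_0 := ds+sd \in I$. Then $e_0$ is a chain map whose reduction mod $\pi$ equals $\bar e$, so $e_0^2 - e_0 \in \pi R$. Apply the standard iteration $e_{n+1} := 3 e_n^2 - 2 e_n^3$; inductively $e_n^2 - e_n \in \pi^{2^n} R$, so $(e_n)$ is $\pi$-adically Cauchy and converges, by completeness of $R$, to an idempotent $e \in R$ lifting $\bar e$. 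Since each $e_n$ is a polynomial in $e_0$ with zero constant term, it lies in the ideal $I$; by closedness of $I$, so does the limit $e$. Then $V := eU$ is a bounded complex of finitely generated projective $A$-modules, contractible by the first paragraph, with $k\tenO V = \bar e(k\tenO U) = W$. The main subtlety I anticipate is this last point — producing a lift of $\bar e$ that is simultaneously idempotent and null-homotopic — which is handled by the combination of two facts: the null-homotopic elements form a two-sided ideal, and that ideal is $\pi$-adically closed, so the entire standard lifting can be performed inside it.
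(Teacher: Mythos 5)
Your proof is correct and takes essentially the same route as the paper's: reformulate contractible summands as null-homotopic idempotents, note that the null-homotopic chain endomorphisms form an ideal $C$ onto which null-homotopies lift (since the components of $U$ are projective), and then lift the idempotent inside that ideal; the paper simply invokes ``standard lifting theorems'' where you carry out the $e_{n+1}=3e_n^2-2e_n^3$ iteration and the closedness-of-the-ideal argument explicitly. One small imprecision: with $A$ only assumed finitely generated (not free) over $\CO$, the kernel of $\End_{\Ch(A)}(U)\to\End_{\Ch(k\tenO A)}(k\tenO U)$ need not literally equal $\pi R$ (torsion in the $U^n$ could keep $g$ in $f=\pi g$ from being a chain map), but it is always contained in $J(R)$, which is all the iteration and the closedness argument require, so the proof goes through unchanged.
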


\begin{proof}
Set $\bar A=$ $k\tenO A$ and $\bar U=$ $k\tenO U$.
Since $U$ is bounded, the algebra $\End_{\Ch(A)}(U)$ is finitely
generated as an $\CO$-module, and $\End_{\Ch(\bar A)}(\bar U)$ is
finite-dimensional. 
Denote by $C$ the ideal in $\End_{\Ch(A)}(U)$ of
chain maps $\psi : U\to$ $U$ which satisfy $\psi\sim 0$; that is, 
$C$ is the kernel of the canonical algebra homomorphism
$\End_{\Ch(A)}(U)\to$ $\End_{K(A)}(U)$. Similarly, denote by
$D$ the kernel of the canonical algebra homomorphism
$\End_{\Ch(\bar A)}(\bar U)\to$ $\End_{K(\bar A)}(\bar U)$. 
Since the components of $U$ are projective, any
homotopy $\bar U\to$ $\bar U[-1]$ lifts to a homotopy 
$U\to$ $U[-1]$. It follows that the canonical map $C\to$ $D$
is surjective. The summand  $W$ of $\bar U$ corresponds to an
idempotent $\eta$ in $\End_{\Ch(\bar A)}(\bar U)$. Since $W$ is 
contractible, this idempotent is contained in $D$. Thus, by standard lifting 
theorems, there is an idempotent $\hat\eta$ in $C$ which lifts 
$\eta$. Thus $V=$ $\hat\eta(U)$ is a contractible direct summand
of $U$ which lifts $W$.  
\end{proof}

The following result identifies bounded complexes of finitely
generated modules which are both projective and injective. 

\begin{Lemma} \label{projinj-bounded}
Let $A$ be a finite-dimensional $k$-algebra. Let $U$ be a bounded
chain complex of finitely generated projective $A$-modules. Suppose 
that $U$ has no nonzero contractible direct summand as a chain complex, 
and that for any bounded above acyclic chain complex $C$ in $\Ch(A)$ we 
have $\Hom_{K(A)}(C,U)=\{0\}$. Then the components of $U$ are injective.
\end{Lemma}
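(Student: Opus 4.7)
The plan is to proceed by downward induction on the degree, showing that each component $U^n$ is injective, starting from the top. First I will invoke Krull--Schmidt for bounded complexes of finitely generated projectives over a finite-dimensional algebra to observe that the hypothesis on contractible summands is equivalent to $U$ being \emph{minimal}, meaning $d^m_U(U^m) \subseteq \rad(U^{m+1})$ for all $m$; indeed, if some differential had a component realising an isomorphism between indecomposable summands, one could split off a contractible summand, contradicting the hypothesis. After a shift, I fix $N$ with $U^m = 0$ for $m > N$.

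For the induction, assume $U^m$ is injective for all $m > n$ (vacuously when $n = N$) and suppose, for contradiction, that some indecomposable direct summand $P$ of $U^n$ is not injective. Let $\iota \colon P \hookrightarrow I(P)$ be the injective envelope, a proper essential extension, and set $M = I(P)/P \neq 0$. Form the bounded acyclic complex $C$ concentrated in degrees $n$, $n+1$, $n+2$ with components $P$, $I(P)$, $M$, and with differentials $\iota$ and the quotient. Build a chain map $f \colon C \to U$ by taking $f^n$ to be the inclusion $\iota_P \colon P \hookrightarrow U^n$ of the summand; by injectivity of $U^{n+1}$, the composite $d^n_U \circ \iota_P \colon P \to U^{n+1}$ extends along $\iota$ to a map $f^{n+1} \colon I(P) \to U^{n+1}$; and since $d^{n+1}_U \circ f^{n+1}$ vanishes on $P$ by $d^2_U = 0$, it factors through $M$ to give $f^{n+2}$. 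By hypothesis $f$ must be null-homotopic.

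The main step is to derive a contradiction from this. A null-homotopy in degree $n$ yields $\iota_P = d^{n-1}_U \circ s^n + s^{n+1} \circ \iota$ for some $s^n \colon P \to U^{n-1}$ and $s^{n+1} \colon I(P) \to U^n$. Composing with the projection $\pi_P \colon U^n \to P$ of the summand, the term $\pi_P \circ d^{n-1}_U \circ s^n$ factors through $\pi_P(\rad(U^n)) = \rad(P)$ by minimality, so it is a non-unit in the local ring $\End_A(P)$. Hence $\pi_P \circ s^{n+1} \circ \iota = \Id_P - \pi_P \circ d^{n-1}_U \circ s^n$ is a unit $u \in \End_A(P)$, and $u^{-1} \circ \pi_P \circ s^{n+1} \colon I(P) \to P$ is a retraction of $\iota$. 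This contradicts the fact that $\iota$ is a proper essential extension, completing the induction. The main obstacle is the careful construction of $f$, which cannot proceed without the inductive hypothesis ensuring injectivity of the higher components; this is the technical reason the induction must run downward from the top degree.
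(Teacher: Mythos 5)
Your proof is correct, and it takes a genuinely different route from the paper's. The paper works directly in the homotopy category: it takes an injective resolution $\alpha \colon U \to I_U$, notes that the cone $C(\alpha)$ is bounded above and acyclic, so the hypothesis forces the connecting morphism $C(\alpha) \to U[1]$ to vanish in $K(A)$; this makes $\ualpha$ a split monomorphism in $K(A)$, and then the ``no contractible summand'' hypothesis is used to lift this splitting from $K(A)$ to $\Ch(A)$ (via the fact that the kernel of $\End_{\Ch(A)}(U) \to \End_{K(A)}(U)$ contains no idempotents and hence lies in the radical), exhibiting $U$ as a direct summand of $I_U$. Your argument instead reformulates the ``no contractible summand'' hypothesis as minimality of $U$ (differentials land in the radical), proceeds by downward induction on degree, and, given a hypothetical non-injective indecomposable summand $P$ of $U^n$, hand-builds the three-term acyclic complex $0 \to P \to I(P) \to I(P)/P \to 0$ together with a chain map into $U$ (using injectivity of $U^{n+1}$ from the inductive step) whose null-homotopy, combined with minimality and locality of $\End_A(P)$, splits the essential monomorphism $P \hookrightarrow I(P)$ — a contradiction. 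Both arguments hinge on the same two hypotheses in the same essential way (acyclic test complexes detect non-split injective embeddings; absence of contractible summands controls the difference between $\Ch(A)$ and $K(A)$), but the paper's proof is shorter and extracts the stronger statement that $U$ is a direct summand of a bounded-above complex of injectives in $\Ch(A)$, whereas yours is more elementary — avoiding the triangulated-category formalism and the axiom used to convert a vanishing cone map into a splitting — and makes the role of minimality completely explicit. Two small points worth flagging for completeness: you should note that $I(P)$ and $I(P)/P$ are finitely generated (true for a finite-dimensional algebra, and needed since the hypothesis quantifies over $C \in \Ch(A)$), and you should record the trivial verification that $d^{n+2}_U \circ f^{n+2} = 0$ (it follows because its precomposition with the epimorphism $I(P) \to I(P)/P$ is $d^{n+2}_U d^{n+1}_U f^{n+1} = 0$), so that $f$ really is a chain map.
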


\begin{proof}
Let $\alpha : U\to$ $I_U$ be an injective resolution of $U$; that is,
$I_U$ is a bounded above chain complex of finitely generated
injective $A$-modules and $\alpha$ is a quasi-isomorphism.
Denote by $\ualpha$ the image of $\alpha$ in $\Hom_{K(A)}(U,I_U)$.
Consider the associated exact triangle in $K(A)$,
$$\xymatrix{ U \ar[r]^{\ualpha}& I_U\ar[r] & C(\alpha) \ar[r]&
U[1]}$$
Since $\alpha$ is a quasi-isomorphism, it follows its cone $C(\alpha)$ 
is acyclic. By the assumptions on $U$, the morphism $C(\alpha)\to$ 
$U[1]$ in this triangle is zero. Therefore, by a standard property of 
triangulated categories (see e. g. \cite[Lemma 3.4.9]{Zim}),
the morphism $\ualpha$ is a split monomorphism in $K(A)$. That is,
there exists a chain map $\delta : I_U\to$ $U$ such that 
$\delta\circ\alpha\sim$ $\Id_U$. 

Since $U$ is bounded, the algebra
$\End_{\Ch(A)}(U)$ is finite-dimensional. We use as before the fact
that idempotents in this algebra correspond to direct summands of $U$ 
as a chain complex, and that contractible direct summand correspond to 
those idempotents which are in the kernel of the canonical algebra 
homomorphism $\End_{\Ch(A)}(U)\to$ $\End_{K(A)}(U)$. 

By the assumptions on $U$, this kernel contains no
idempotents, and hence is contained in the radical $J(\End_{\Ch(A)}(U))$.
Since $\delta\circ\alpha$ maps to the identity in $\End_{K(A)}(U)$,
it follows that $\delta\circ\alpha$ is invertible in $\End_{\Ch(A)}(U)$.
Thus the chain map $\beta=$ $(\delta\circ\alpha)^{-1}\circ\delta$
satisfies $\beta\circ\alpha=$ $\Id_U$. This shows that $U$ is
isomorphic to a direct summand of $I_U$. In particular, the components
of $U$ are injective.
\end{proof}

\section{Proof of Theorems \ref{thm2} and \ref{thm3}}

An $\CO$-algebra $A$ is {\it symmetric} if $A$ is free of finite rank as
an $\CO$-module, and if $A$ is isomorphic to its $\CO$-dual $A^*$ as
an $A$-$A$-bimodule. One of the special features of a symmetric
$\CO$-algebra $A$ is that the two duality functors with respect to
$A$ and $\CO$ are isomorphic; that is, for any left $A$-module $U$, there
is an isomorphism $\Hom_A(U,A)\cong$ $U^*$ of right $A$-modules which
is natural in $U$. More precisely, any choice of a bimodule isomorphism
$A\cong$ $A^*$ induces such an isomorphism of duality functors as
follows: if $s\in$ $A^*$ is the image of $1$ under a bimodule
isomorphism $A\cong$ $A^*$, then the map sending $\varphi\in$
$\Hom_A(U,A)$ to $s\circ\varphi\in$ $U^*$ is an isomorphism, for any
$A$-module $U$. The naturality implies in particular that this
isomorphism is an isomorphism as right $\End_A(U)$-modules.

\begin{Lemma} \label{EXproj}
Let $A$ be a finite-dimensional $k$-algebra and let $X$ be a
finite-dimensional $A$-module. Set $E=$ $\End_A(X)$.
If $X$ has a direct summand isomorphic to $A$ as an $A$-module, then
$X$ is projective as an $E$-module. If in addition $A$ is symmetric,
then $X^*$ is projective as an $E^\op$-module.
\end{Lemma}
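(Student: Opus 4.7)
The plan is to realize $X$ (for the first claim) and $X^*$ (for the second) as direct summands of the regular left $E$-module, respectively of the regular left $E^\op$-module, by applying a suitable Hom functor to the decomposition $X = A\oplus Y$ coming from the hypothesis.

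For the first assertion I would apply the contravariant functor $\Hom_A(-,X)\colon \mod(A)^\op\to \mod(E)$ -- where $E$ acts on $\Hom_A(-,X)$ by postcomposition -- to the decomposition $X=A\oplus Y$. This produces a decomposition
\[
E\ =\ \Hom_A(X,X)\ \cong\ \Hom_A(A,X)\oplus \Hom_A(Y,X)
\]
as left $E$-modules, and the standard evaluation-at-$1$ map $\Hom_A(A,X)\cong X$ is visibly $E$-linear for the left $E$-module structures on both sides. Hence $X$ is a direct summand of the regular left $E$-module, and so is projective.

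For the second assertion I would first invoke the symmetry of $A$ exactly once, via the natural isomorphism $\Hom_A(U,A)\cong U^*$ of right $A$-modules that is recalled just before the lemma; by naturality this is an isomorphism of right $\End_A(U)$-modules. Taking $U=X$ identifies $\Hom_A(X,A)$ with $X^*$ as right $E$-modules, equivalently as left $E^\op$-modules. Now apply the covariant functor $\Hom_A(X,-)\colon \mod(A)\to \mod(E^\op)$ (with $E$ acting on the right by precomposition) to $X=A\oplus Y$, obtaining
\[
E\ =\ \Hom_A(X,X)\ \cong\ \Hom_A(X,A)\oplus \Hom_A(X,Y)
\]
as right $E$-modules. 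This exhibits $X^* \cong \Hom_A(X,A)$ as a direct summand of the regular right $E$-module, i.e.\ projective as a left $E^\op$-module.

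The argument is essentially routine; the only point requiring real care is to keep the various left/right and $E$ versus $E^\op$ module structures straight. The key insight -- and the single place where the hypothesis that $A$ be symmetric enters -- is the identification $\Hom_A(X,A)\cong X^*$, since only with this identification does the decomposition of $E$ indexed by the target give information about $X^*$ rather than about $\Hom_A(X,A)$.
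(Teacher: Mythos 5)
Your proposal is correct and follows essentially the same route as the paper: the paper likewise identifies $X\cong\Hom_A(A,X)$ as a projective $E$-module (since $A$ is a summand of $X$) and $X^*\cong\Hom_A(X,A)$ as a projective $E^{\op}$-module via the symmetry of $A$. You merely spell out the underlying reason for projectivity — exhibiting these as direct summands of $E=\Hom_A(X,X)$ — which the paper leaves implicit.
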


\begin{proof}
If $A$ is isomorphic to a direct summand of $X$, then $\Hom_A(A,X)$
is a projective $E$-module, and clearly $\Hom_A(A,X)\cong$ $X$.
If in addition $A$ is symmetric, then we have a natural isomorphism
$X^*\cong$ $\Hom_A(X,A)$, and this is a projective $E^\op$-module.
\end{proof}

\begin{Proposition} \label{EndXinj}
Let $A$ be a symmetric $k$-algebra and $X$ a finite-dimensional
$A$-module. Suppose that $A$ is isomorphic to a direct summand of $X$.
Set $E=$ $\End_A(X)$. Let $U$ be a direct summand of $X$.
The following are equivalent.

\smallskip\noindent (i)
The $A$-module $U$ is projective.

\smallskip\noindent (ii)
The $E^\op$-module $\Hom_A(X,U)$ is injective.

\smallskip\noindent (iii)
The $E$-module $\Hom_A(U,X)$ is injective.
\end{Proposition}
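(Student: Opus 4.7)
The plan is as follows. The implications (ii)$\Rightarrow$(i) and (iii)$\Rightarrow$(i) should fall out immediately from Lemma \ref{HomXMinj}: injectivity of $\Hom_A(X,U)$ as an $E^\op$-module forces $U$ to be injective as an $A$-module, while injectivity of $\Hom_A(U,X)$ as an $E$-module forces $U$ to be projective as an $A$-module. Since $A$ is symmetric, a finitely generated $A$-module is injective if and only if it is projective, so both conclusions collapse to (i) with no further work.

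The substance of the proof is in $(i)\Rightarrow(ii)$ and $(i)\Rightarrow(iii)$. The key intermediate step I would establish is that, under the standing hypotheses, $X$ is both projective and injective as a left $E$-module, and correspondingly $X^*$ is both projective and injective as a right $E$-module. The projectivity on each side is given directly by Lemma \ref{EXproj}, which applies because $A$ is symmetric and $A$ is a direct summand of $X$. The injectivity then comes from $k$-duality over the finite-dimensional algebra $E$: the $k$-dual of a finitely generated projective right $E$-module is a finitely generated injective left $E$-module, and symmetrically. Applied to $X^*$ (a projective right $E$-module by Lemma \ref{EXproj}), this yields $X\cong X^{**}$ injective as a left $E$-module; the symmetric argument gives $X^*$ injective as a right $E$-module.

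Granting this, both remaining implications reduce to routine direct-summand embeddings. If $U$ is projective over $A$, then $U$ is isomorphic to a direct summand of $A^n$ for some $n$. Applying $\Hom_A(-,X)$ to this embedding realises $\Hom_A(U,X)$ as a direct summand of $\Hom_A(A^n,X)\cong X^n$ as left $E$-modules; since $X$, and hence $X^n$, is injective over $E$, so is the summand $\Hom_A(U,X)$, which is (iii). For (ii), applying $\Hom_A(X,-)$ to the same embedding realises $\Hom_A(X,U)$ as a direct summand of $\Hom_A(X,A)^n$ as right $E$-modules; by the symmetric-algebra pairing discussed just before the statement, the canonical isomorphism $\Hom_A(X,A)\cong X^*$ is natural, hence an isomorphism of right $E$-modules, so $\Hom_A(X,U)$ is a summand of $(X^*)^n$, which is injective over $E^\op$. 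The only real subtlety I foresee is keeping careful track of left versus right $E$-structures through the various dualities; in particular, one needs the naturality clause of the symmetric-algebra isomorphism $\Hom_A(-,A)\cong (-)^*$ to know that it respects the relevant $E$-module structure and is not merely an isomorphism of $k$-vector spaces.
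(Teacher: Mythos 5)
Your proposal is correct and follows essentially the same route as the paper: both directions (ii)$\Rightarrow$(i) and (iii)$\Rightarrow$(i) via Lemma \ref{HomXMinj} together with $A$ symmetric, and the forward implications via Lemma \ref{EXproj} combined with $k$-duality over $E$ and the natural identification $\Hom_A(X,A)\cong X^*$ as $E^\op$-modules. The paper phrases the reduction as ``it suffices to treat $U=A$ and show $\Hom_A(X,A)$, $\Hom_A(A,X)$ are injective by proving their $k$-duals are projective,'' whereas you first record that $X$ and $X^*$ are projective-injective over $E$ and $E^\op$ respectively and then invoke the direct-summand-of-$A^n$ reduction; these are the same argument organized in a slightly different order.
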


\begin{proof}
If $\Hom_A(X,U)$ is an injective $E^\op$-module, then by Lemma
\ref{HomXMinj} (i) the $A$-module $U$ is injective, hence also 
projective, since we assume that $A$ is symmetric. Thus (ii) implies
(i). Similarly, if $\Hom_A(U,X)$ is injective, then by Lemma 
\ref{HomXMinj} (ii) the $A$-module $U$ is projective. Thus (iii) 
implies (i). In order to show that (i) implies (ii) and (iii), it 
suffices to show that $\Hom_A(X,A)$ is
an injective $E^\op$-module and that $\Hom_A(A,X)$ is an injective
$E$-module. Thus it suffices to show that their duals $\Hom_A(X,A)^*$
and $\Hom_A(A,X)^*$ are projective as modules over $E$ and $E^\op$,
respectively. Since $A$ is symmetric, we have a natural
isomorphism $\Hom_A(X,A)\cong \Hom_k(X,k)=$ $X^*$.
The naturality implies in particular, that these isomorphisms are
isomorphisms as $E^\op$-modules.
Thus we have an isomorphism of $E$-modules $\Hom_A(X,A)^*\cong$ $X$,
and this is a projective $E$-module by Lemma \ref{EXproj}.
Similarly, note that $\Hom_A(A,X)\cong$ $X$, and
hence that $\Hom_A(A,X)^*\cong X^*$, which is indeed
a projective $E^\op$-module, again by Lemma \ref{EXproj}.
\end{proof}

\begin{proof}[{Proof of Theorem \ref{thm2}}]
We use the notation of Theorem \ref{thm2}.
By Proposition \ref{EndXinj} and Lemma \ref{lemmaOinj3}, projective
indecomposable modules over $E$ and $F$ which are also relatively
$\CO$-injective, correspond to the indecomposable summands of $A$ and $B$,
respectively.
Denote by $e$ an idempotent in $E$ such that $e(X)\cong$ $A$, and
denote by $f$ an idempotent in $F$ such that $f(Y)\cong$ $B$.
Then $Ee$ is a direct sum of indecomposable $E$-modules which
are projective and relatively $\CO$-injective, and any
indecomposable $E$-module which is projective and relatively
$\CO$-injective is isomorphic to a direct summand of $Ee$.
The right $E$-module $eE$, the left $F$-module $Ff$ and the right
$F$-module $fF$ have the analogous properties.

Let $M$ be an $E$-$F$-module and $N$ an $F$-$E$-bimodule inducing a
Morita equivalence; that is, $M$, $N$ are finitely generated
projective as left and right modules, and we have bimodule
isomorphisms $M\tenF N\cong$ $E$ and $N\tenE M\cong$ $F$.
A Morita equivalence between $E$ and $F$ preserves projective
indecomposables which are also relatively $\CO$-injective.
Thus $N\tenE Ee\cong$ $Ne$ is a direct sum of summands of
$Ff$. In particular, $fNe$ is projective as a $fFf$-module. 
Similarly, $eM$, as a right $F$-module, is a direct
sum of summands of $fF$, and hence $eMf$ is projective as a right
$fFf$-module. Using Lemma \ref{eMf}, applied to $E$ and $F$ instead of $A$ 
and $B$, we have isomorphisms
$$eEe \cong eM\tenF Ne\cong eMf\ten_{fFf} fNe$$
Exchanging the roles of $E$ and $F$ shows similary that
$fFf\cong$ $fNe\ten_{eEe} eMf$, and that $eMf$ and $fNe$ are both
projective as left and as right modules. Thus
the bimodules $eMf$ and $fNe$ induce a Morita equivalence between 
$eEe$ and $fFf$. Since $eEe\cong$ $\End_A(e(X))\cong$
$\End_A(A)\cong$ $A^\op$ and similarly $fFf\cong$ $B^\op$, passing to
opposite algebras yields a Morita equivalence between $A$ and $B$.

We need to show that this Morita equivalence restricts to an 
equivalence between $\add(X)$ and $\add(Y)$. It suffices to 
show that the equivalence $\mod(A)\cong$ $\mod(B)$ sends
$\add(X)$ to $\add(Y)$. Since $A$ and $B$ are symmetric, it
suffices to show that $fNe\ten_{eEe}-$ sends $\add(X^*)$ to
$\add(Y^*)$. Let $V$ be an $A$-module in $\add(X)$. Then 
$\Hom_A(V,X)$ is a projective $E$-module. Since $N\ten_E-$ is
an equivalence of categories, it follows that $N\ten_E\Hom_A(V,X)$
is a projective $F$-module. Thus there is a $B$-module $W$ in
$\add(Y)$ such that $N\ten_E\Hom_A(V,X)\cong$ $\Hom_B(W,Y)$
as $F$-modules. Multiplying by $f$ yields an isomorphism of
$fFf$-modules 
$$fN\ten_E\Hom_A(V,X)\cong f\Hom_B(W,Y)\cong \Hom_B(W,B)$$
Since $fN$ is a direct sum of summands of $eE$, it follows from
Lemma \ref{eMf} that the left side is isomorphic to
$$fNe\ten_{eEe} e\Hom_A(V,X)\cong fMe\ten_{eEe}\Hom_A(V,A)$$
Since $A$ and $B$ are symmetric, we have $\Hom_A(V,A)\cong$
$V^*$ and $\Hom_B(W,B)\cong$ $W^*$. This shows that
$fNe\ten_{eEe} V^*\cong$ $W^*$, and hence the functor
$fNe\ten_{eEe}-$ sends $\add(X^*)$ to $\add(Y^*)$ as claimed.
\end{proof}

\begin{proof}[{Proof of Theorem \ref{thm3}}]
We use the notation of Theorem \ref{thm3}.
Let $M$ be a bounded complex of $E$-$F$-bimodules and $N$ a bounded
complex of $F$-$E$-bimodules inducing a Rickard equivalence; that is, the
components of $M$, $N$ are finitely generated
projective as left and right modules, and we have homotopy equivalences
of chain complexes of bimodules $E$-$E$-bimodules $M\tenF N\simeq$ $E$ and 
$F$-$F$-bimodules $N\tenE M\simeq$ $F$.
Denote by $e$ an idempotent in $E$ such that $e(X)\cong$ $A$, and
denote by $f$ an idempotent in $F$ such that $f(Y)\cong$ $B$.
Multiplying the above homotopy equivalences by $e$ and $f$ on both
sides yields homotopy equivalences of chain complexes of
$eEe$-$eEe$-bimodules $eM\tenF Ne\simeq eEe$ and of
$fFf$-$fFf$-bimodules $fN\tenE Mf\simeq$ $fFf$.

We will show that there are quasi-isomorphisms 
$$eMf\ten_{fFf}fNe\to eM\tenF Ne$$ 
$$fNe\ten_{eEe} eMf\to fN\ten_E Mf$$ 
whose restriction to the left and to the right are homotopy 
equivalences, and we will then see that this implies that the functors 
$eMf\ten_{fFf}-$ and $fNe\ten_{eEe}-$ induce inverse derived 
equivalences. 
 
The complex $Ne\cong$ $N\tenE Ee$ is a bounded complex of finitely 
generated projective $F$-modules. We will show that 
$$Ne \cong N_0 \oplus N_1$$
for some contractible complex $N_1$ and a complex $N_0$ whose 
components consist of finite direct sums of summands of $Ff$. In order 
to show this,  by Lemma \ref{contractible-lift}, we may assume that 
$\CO=k$. Since $Ee$ is injective, we have $\Hom_{K(E)}(C,Ee)=$ $\{0\}$ 
for any acyclic bounded above complex $C$ of finitely generated 
$E$-modules. Since $N\ten_E-$ induces an equivalence of homotopy
categories of chain complexes $K(E)\cong$ $K(F)$, it follows that we 
have $\Hom_{K(F)}(D, Ne)=$ $\{0\}$ for any 
acyclic bounded above complex $D$ of finitely generated $F$-modules. It 
follows from Lemma \ref{projinj-bounded}, that the indecomposable 
direct summands of $Ne$ which are not contractible consist of injective 
$F$-modules, hence of sums of summands of $Ff$. Reverting to general 
$\CO$, multiplying the previous isomorphism by $f$ on the left yields an 
isomorphism of chain complexes of $fFf$-modules
$$fNe \cong fN_0 \oplus fN_1$$
such that $fN_0$ is a bounded complex of finitely generated projective
$fFf$-modules, and $fN_1$ is a bounded contractible complex.
The same argument shows  that we have an isomorphism of chain
complexes of right $F$-modules
$$eM\cong M_0\oplus M_1$$
where $M_0$ is a complex of right $F$-modules which are
finite direct sums of summands of $fF$, and $M_1$
is a contractible complex of right $F$-modules. Thus, as before,
multiplying this isomorphism on the right by $f$ yields an isomorphism of 
chain complexes of right $fFf$-modules
$$eMf\cong M_0f \oplus M_1f$$
such that $M_0f$ is a bounded complex of finitely generated projective
right $fFf$-modules and $M_1f$ is a bounded contractible complex.
Thus we have a decomposition as complexes of right $eEe$-modules
$$eM\ten_F Ne = M_0\tenF Ne \oplus M_1 \tenF Ne \cong$$
$$ M_0f \ten_{fFf} fNe \oplus M_1\tenF Ne$$
where we have made use of Lemma \ref{eMf} for the isomorphism.
We also have
$$eMf\ten_{fFf} fNe = M_0f \ten_{fFf} fNe \oplus M_1f\ten_{fFf} fNe$$
In both of these isomorphisms, the right most terms are contractible
as chain complexes of right $eEe$-modules, because $M_1f$ is 
contractible as a chain complex of right $fFf$-modules.
Thus we have a chain map of complexes of $eEe$-$eEe$-bimodules
$$eMf\ten_{fFf} fNe \to eM\tenF Ne\simeq eEe$$
which restricts to a homotopy equivalence as a chain map of complexes
of right $eEe$-modules, and, by the analogous argument, restricts
to a homotopy equivalence as a chain map of complexes of left
$eEe$-modules. In particular, this bimodule chain map is a
quasi-isomorphism. Similarly, we have a bimodule quasi-isomorphism
$$fNe\ten_{eEe} eMf \to fN\ten_E Mf \simeq fFf$$
which restricts to homotopy equivalences on the left and on the right.

We show next that the functor $eMf\ten_{fFf}-$ from $\Ch(fFf)$ to
$\Ch(eEe)$ preserves quasi-isomorphisms. We use the right $fFf$-chain
complex decomposition $eMf=$ $M_0f\oplus M_1f$ above.
If $\beta : V\to V'$ is a quasi-isomorphism in $\Ch(fFf)$, then
$\Id_{eMf}\ten\beta : eMf\ten_{fFf} V\to$ $eMf\ten_{fFf} V'$
decomposes as a direct sum of chain maps of complexes of
$\CO$-modules $M_0f\ten_{fFf} V\to$ $M_0f\ten_{fFf} V'$ and
$M_1f\ten_{fFf} V\to$ $M_1f\ten_{fFf} V'$. The first of these
is a quasi-isomorphism because $M_0f$ is a bounded complex of
projective right $fFf$-modules. The second is trivially a 
quasi-isomorphism, since both 
$M_1f\ten_{fFf} V$ and $M_1f\ten_{fFf} V'$ are acyclic, as $M_1f$ is
contractible as a complex of right $fFf$-modules. 
Thus the functor $eMf\ten_{fFf}-$ induces a functor on derived
categories; similarly for $fNe\ten_{eEe}-$. These two functors
are inverse to each other as functors on the derived categories. 
Indeed, since the above bimodule chain map 
$eMf\ten_{fFf} fNe \to eEe$ is a homotopy equivalence as chain map
of complexes of right $eEe$-modules, it follows that for any
complex $U$ in $\Ch(eEe)$, the induced chain map
$eMf\ten_{fFf} fNe\ten_{eEe} U \to U$ is a homotopy equivalence as 
a chain map of complexes of $\CO$-modules, hence a quasi-isomorphism
as a chain map of complexes of $eEe$-modules.
The result follows.
\end{proof}

\begin{Remark} \label{rem-b}
The above proof does not show that the quasi-isomorphisms 
$eMf\ten_{fFf}fNe\to$ $eM\tenF Ne$ and $fNe\ten_{eEe} eMf\to$ 
$fN\ten_E Mf$ are homotopy equivalences as bimodule chain maps.
It also does not show that $eMf$ and $fNe$ are projective as
complexes of left and right modules. In particular, this proof
does not show that $eMf$ and $fNe$ are Rickard complexes, and it seems
unclear whether the induced derived equivalence preserves the
subcategories of chain complexes over $\add(X)$ and $\add(Y)$. 
The proof does show that the derived equivalence induced
by $M$ and $N$ restricts to an equivalence 
$$K^b(\add(Ee))\cong K^b(\add(Ff))\ ,$$ 
thanks to the isomorphism $Ne \cong N_0 \oplus N_1$ for some 
contractible complex $N_1$ and a complex $N_0$ with components in 
$\add(Ff)$, and the analogous isomorphism for $Mf$, where the 
notation is as in the proof above. In the case where the base
ring $\CO$ is a field, this can also be deduced from 
\cite[Theorem 4.3]{FaHuKo}, making use of Remark \ref{rem-a}.  
\end{Remark}

\section{Proof of Theorem \ref{thm1} and Corollary \ref{cor1}}
\label{thm1proof}

The proof of Theorem \ref{thm1} is played back to the theorems 
\ref{thm2} and \ref{thm3}, together with
description of cohomological Mackey functors in terms of
source algebras of blocks in \cite{LincoMack}, extending ideas going
back to Yoshida \cite{YoshidaII}. 

\begin{Proposition} \label{source1}
Let $A$ be a source algebra of a block of a finite group with
defect group $P$. Set $X= \oplus_Q\ A\tenOQ \CO$ and $E=$ $\End_A(X)$,
where in the direct sum $Q$ runs over the subgroups of $P$. For $\iota$
a primitive idempotent in $E$, the following are equivalent.

\smallskip\noindent (i)
$E\circ \iota$ is a relatively $\CO$-injective left $E$-module.

\smallskip\noindent (ii)
$\iota \circ E$ is a relatively $\CO$-injective right $E$-module.

\smallskip\noindent (iii)
$\iota(X)$ is isomorphic to a projective indecomposable left
$A$-module.
\end{Proposition}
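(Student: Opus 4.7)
The plan is to apply Proposition~\ref{EndXinj} to the mod-$p$ reductions $\bar A = k\tenO A$ and $\bar X = k\tenO X$, and then use the lemmas of the preceding section to translate the resulting equivalence back to $\CO$.

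First I verify the required hypotheses. Source algebras of blocks are symmetric $\CO$-algebras, so $\bar A$ is a symmetric $k$-algebra. The $Q = \{1\}$ term in the defining sum for $X$ gives $A$ as a direct summand of $X$, so $\bar A$ is a direct summand of $\bar X$. Each $A\tenOQ\CO$ is $\CO$-free (since $A$ is free as a right $\CO Q$-module in the source-algebra setting), so $X$ and every direct summand $\iota(X)$ are $\CO$-free. I next need the base-change identity $k\tenO E \cong \bar E := \End_{\bar A}(\bar X)$, so that the primitive idempotent $\iota\in E$ reduces to a primitive idempotent $\bar\iota\in\bar E$ via standard lifting, and so that the principal ideals $E\iota$ and $\iota E$ reduce to $\bar E\bar\iota$ and $\bar\iota\bar E$, respectively. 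Via the induction--restriction adjunction one has a Mackey-style decomposition $E = \bigoplus_{Q,R}(A\tenOR\CO)^Q$, where the superscript denotes $Q$-fixed points for the left action of $\CO Q \subseteq A$; the identity $k\tenO E \cong \bar E$ then reduces to compatibility of these fixed-point modules with reduction modulo $p$, which holds because each $A\tenOR\CO$ is a trivial-source $\CO Q$-module in the source-algebra context. This is precisely the input underlying the description of $\coMack(G,b)$ in terms of $A$ in \cite{LincoMack}.

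With these preliminaries in place, Proposition~\ref{EndXinj} applied to $\bar A$, $\bar X$, and the direct summand $\bar U := \bar\iota(\bar X) = k\tenO\iota(X)$ of $\bar X$ yields the equivalence of the following three statements: $\bar U$ is projective as a $\bar A$-module; $\Hom_{\bar A}(\bar X,\bar U) \cong \bar\iota\bar E$ is injective as a $\bar E^\op$-module; $\Hom_{\bar A}(\bar U,\bar X) \cong \bar E\bar\iota$ is injective as a $\bar E$-module. Lemma~\ref{lemmaOinj1} translates the first statement into projectivity of $\iota(X)$ over $A$; combined with the indecomposability of $\iota(X)$ supplied by the primitivity of $\iota$, this is condition~(iii). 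Lemma~\ref{lemmaOinj3} applied to $E^\op$ and $E$ respectively translates the remaining two statements into relative $\CO$-injectivity of $\iota E$ as an $E^\op$-module and of $E\iota$ as an $E$-module, yielding conditions~(ii) and~(i).

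The main obstacle is the base-change identity $k\tenO E \cong \bar E$, or equivalently the assertion that the formation of $Q$-fixed points on each trivial-source $\CO Q$-module $A\tenOR\CO$ commutes with reduction modulo $p$. Once this structural input is granted, the remainder is a routine translation between $\CO$- and $k$-side conditions via the lemmas of the previous section.
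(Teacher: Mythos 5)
Your proof takes essentially the same route as the paper's one-line argument: verify that $A$ is a symmetric $\CO$-algebra and appears as a direct summand of $X$ (the $Q=1$ term), then apply Proposition~\ref{EndXinj} to $\bar A=k\tenO A$ and $\bar X=k\tenO X$, and translate back to $\CO$ via Lemmas~\ref{lemmaOinj1} and~\ref{lemmaOinj3}. The one point where you are more explicit than the paper is the base-change identity $k\tenO E\cong\End_{\bar A}(\bar X)$, which the paper treats as implicit; you correctly flag it as the technical input needed to identify $k\tenO(E\iota)$ with $\bar E\bar\iota$ (resp.\ $k\tenO(\iota E)$ with $\bar\iota\bar E$), and you correctly trace it to the trivial-source structure of the modules $A\tenOR\CO$ making $Q$-fixed points compatible with reduction modulo $p$.
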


\begin{proof}
Note that $A$ is a symmetric $\CO$-algebra and that $A$ is isomorphic
to the summand indexed by the trivial group $1$ in the direct
sum $X=\oplus_Q\ A\tenOQ \CO$. Thus the result follows from Proposition
\ref{EndXinj}, combined with Lemma \ref{lemmaOinj3}.
\end{proof}

\begin{proof}[{Proof of Theorem \ref{thm1}}]
Denote by $P$ a defect group and by $A$ a source algebra of the block
$b$ of $\OG$. Similarly, denote by $Q$ a defect group and by $B$ a source
algebra of the block $c$ of $\OH$. 
Set $X= \oplus_R\ A\tenOR \CO$, where $R$ runs over the subgroups of $P$,
and set $E=$ $\End_A(X)$. Similarly, 
$Y= \oplus_T\ A\tenOT \CO$, where $T$ runs over the subgroups of $Q$, 
and set $F=$ $\End_B(Y)$. It follows from Theorem \ref{thm2} 
(resp. Theorem \ref{thm3}) that if
$E$ and $F$ are Morita equivalent (resp. Rickard equivalent) then 
$A$ and $B$ are permeable Morita equivalent (resp. derived equivalent). 
By \cite[Theorem 1.1]{LincoMack} we have
$\coMack(G,b)\cong$ $\mod(E^\op)$ and $\coMack(H,c)\cong$
$\mod(F^\op)$, whence the result.
\end{proof}

A permeable Morita equivalence between block algebras over $k$ need
not be splendid; see \cite[Remark 4.7]{Rogn}. In characteristic
zero, however, permeable Morita equivalences are splendid.

\begin{Proposition} \label{permeablezero}
If $\CO$ has characteristic zero, then a permeable Morita equivalence
between two blocks of finite group algebras over $\CO$ is splendid. 
\end{Proposition}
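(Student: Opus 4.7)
The plan is to reduce to source algebras via \cite[Theorem 1.1]{LincoMack} and Theorem \ref{thm2}, and then invoke Weiss' theorem \cite{Weiss}, which is where the characteristic zero hypothesis enters. Let $A$, $B$ be source algebras of $b$, $c$ with defect groups $P$, $Q$, and set $X = \oplus_R\, A \tenOR \CO$, $Y = \oplus_T\, B \tenOT \CO$, with $R$, $T$ running over subgroups of $P$, $Q$. As in the proof of Theorem \ref{thm1}, a permeable Morita equivalence between $\OGb$ and $\OHc$ corresponds to a Morita equivalence between $A$ and $B$ given by an $A$-$B$-bimodule $M$ with inverse $N$ such that the functor $M \tenB -$ restricts to an equivalence $\add(Y) \cong \add(X)$. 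Through the source algebra structures $\CO P \hookrightarrow A$ and $\CO Q \hookrightarrow B$, the bimodule $M$ becomes an $\CO(P \times Q^\op)$-lattice.

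Since the summand $B = B \tenO \CO$ of $Y$ satisfies $M \tenB B \cong M$, the permeable condition forces $M \in \add(X)$ as a left $A$-module; because each $A \tenOR \CO$ is a $p$-permutation $\CO P$-lattice (being a quotient of the $p$-permutation $\CO P$-$\CO P$-bimodule $A$), it follows that $M$ is $p$-permutation as a left $\CO P$-lattice. Symmetrically, applying the inverse equivalence to $A = A \tenO \CO \in \add(X)$ gives $N \in \add(Y)$ as a left $B$-module, so $N$ is $p$-permutation as a left $\CO Q$-lattice; using $\CO$-duality and the symmetry of $A$, $B$, this yields that $M$ is $p$-permutation as a right $\CO Q$-lattice. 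More generally, the conditions $M \tenOT \CO \in \add(X)$ for every $T \leq Q$, and $N \tenOR \CO \in \add(Y)$ for every $R \leq P$, translate into $p$-permutation conditions on appropriate subquotients of $M$ viewed as a $\CO(P \times Q^\op)$-lattice.

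I would then apply Weiss' theorem: in characteristic zero, an $\CO G$-lattice over a finite $p$-group $G$ is a permutation lattice as soon as its restriction to, and fixed points under, a normal subgroup $N$ are permutation lattices for $N$ and $G/N$ respectively. Iterating this along a chain of normal subgroups of $P \times Q^\op$, the data collected above forces $M$ to be a $p$-permutation $\CO(P \times Q^\op)$-lattice. Such a bimodule, implementing a Morita equivalence between source algebras, lifts to a splendid Morita equivalence between $\OGb$ and $\OHc$; by the Scott--Puig theorem the source algebras are then isomorphic as interior $P$-algebras. The main obstacle I foresee is precisely to check Weiss' hypotheses from the permeable data: the latter directly controls only the restrictions of $M$ to the factors $P \times 1$ and $1 \times Q^\op$ and the corresponding coinvariants, whereas for Weiss one also needs control along the twisted diagonal subgroups $\{(\phi(t), t) : t \in Q\}$ of $P \times Q^\op$, which govern the genuine $p$-permutation structure over the product group.
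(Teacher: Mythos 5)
Your overall strategy --- invoke Weiss' theorem to upgrade a permeable Morita equivalence to a splendid one --- is the right one, but the ``main obstacle'' you flag at the end reveals a misreading of what Weiss' theorem delivers. Weiss' criterion (\cite[Theorem 2]{Weiss}, extended to $\CO$ in \cite[Appendix 1]{Puigbook}) says: for a finite $p$-group $G$ with normal subgroup $N$ and an $\CO G$-lattice $M$, if $\Res^G_N M$ is \emph{free} over $\CO N$ and the fixpoints $M^N$ form a permutation $\CO[G/N]$-lattice, then $M$ is a permutation $\CO G$-lattice. The twisted diagonal subgroups of $P\times P'$ are part of the \emph{conclusion}, not the hypotheses --- they come for free. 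There is no iteration along a chain of normal subgroups, and nothing to verify along the diagonals; your plan to ``iterate'' would in fact get stuck precisely because the permeable data gives you no direct control there, as you observe.

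The paper applies Weiss' criterion exactly once, with $G = P\times P'$ and $N = 1\times P'$, and works directly with the block algebras $B$, $B'$ rather than routing through source algebras and Theorem \ref{thm2}. Freeness of $M$ over $\CO(1\times P')$ follows simply from $M$ being projective as a right $B'$-module and $B'$ being $\CO P'$-free; no permeable input is needed for this. Since $M$ is free over $\CO P'$, the coinvariants $M\ten_{\CO P'}\CO$ coincide with the invariants $M^{P'}$, and this module is the image under the permeable functor $M\ten_{B'}-$ of the $p$-permutation $B'$-module $B'\ten_{\CO P'}\CO$, hence a $p$-permutation $B$-module, whose restriction to $P$ is a permutation $\CO P$-module. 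Both Weiss hypotheses thus follow from the permeable condition applied to a \emph{single} module, and Weiss' conclusion gives that $M$ is a permutation $\CO(P\times P')$-module, i.e.\ the equivalence is splendid. Note also that your stated form of Weiss' hypotheses (restriction to $N$ merely a permutation lattice) is weaker than what the theorem actually requires --- it needs freeness --- though in this application freeness is what naturally holds.
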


\begin{proof}
Let $B$, $B'$ be block algebras of some finite group algebras over 
$\CO$, with defect groups $P$, $P'$, respectively. Let $M$ be a 
$B$-$B'$-bimodule inducing a permeable Morita equivalence. Then 
$M\ten_{B'}-$ sends the $p$-permutation $B'$-module 
$B'\ten_{\CO P'}\CO$ to the  $p$-permutation $B$-module 
$M\ten_{\CO P'}\CO$. In particular, $M$ is free as a left $\OP$-module, 
as a right $\CO P'$-module, and $M\ten_{\CO P'}\CO$ is a permutation 
$\OP$-module. Note that since $M$ is free as a right $\CO P'$-module, 
the quotient $M\ten_{\CO P'}\CO$ is isomorphic to the fixpoints 
$M^{P'}$ in $M$ with respect to the right action of $P'$ on $M$. 
Weiss' criterion \cite[Theorem 2]{Weiss} (adapted to the more general 
coefficient ring $\CO$ in \cite[Appendix 1]{Puigbook}),  
applied to $P\times P'$ and the normal subgroup $1\times P'$ instead of 
$G$ and $N$, respectively, implies that $M$ is a permutation 
$\CO(P\times P')$-module. 
\end{proof}

\begin{proof}[{Proof of Corollary \ref{cor1}}]
If $B$ and $B'$ are splendidly Morita equivalent, then their associated 
categories of cohomological Mackey functors are equivalent by 
\cite[Proposition 4.5]{Rogn}. 
Conversely, if $\chr(\CO)=0$ and if the categories of cohomological 
Mackey functors of $B$, $B'$ are equivalent, then $B$, $B'$ are 
permeable Morita equivalent by Theorem \ref{thm1}, hence splendidly
Morita equivalent by Proposition \ref{permeablezero}.
\end{proof}
 
\section{A remark on nilpotent blocks}

By results of Puig in \cite{Punil} and \cite{Puigbook}, if a block $b$ 
of a finite group algebra $\OG$ is nilpotent, then $\OGb$ is Morita 
equivalent to $\OP$, and if $\CO$ has characteristic zero, then the 
converse holds as well. Thus, if $\chr(\CO)=0$, then
Corollary \ref{cor1} implies that $b$ is nilpotent with a source algebra
isomorphic to $\OP$ if and only if $\coMack(G,B)\cong\coMack(P)$.
For derived equivalences, Theorem \ref{thm1} this yields the following.

\begin{Theorem}
Let $G$ be a finite group and $b$ a block of $\OG$. Suppose that
$\CO$ has characteristic zero. If the categories $\coMack(G,b)$ and 
$\coMack(P)$ are Rickard equivalent, then $b$ is nilpotent, with defect 
groups isomorphic to $P$.
\end{Theorem}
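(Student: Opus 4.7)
The plan is to apply Theorem~\ref{thm1} to convert the Rickard equivalence on the cohomological Mackey side into a derived equivalence between $\OGb$ and $\OP$, upgrade this to a Morita equivalence using the characteristic zero hypothesis and the local structure of $\OP$, and then invoke the theorem of Puig on nilpotent blocks cited in the paragraph preceding the statement. By Theorem~\ref{thm1}, the Rickard equivalence between $\coMack(G,b)$ and $\coMack(P)$ induces a derived equivalence between $\OGb$ and $\OP$ (the unique block of $\OP$ has defect group $P$, so that $\OP$ is its own source algebra and $\coMack(P)$ is the category of cohomological Mackey functors for this block). Tensoring with $k$ and using that $K_0$ is a derived invariant, we get $\ell(b) = 1$; hence the basic algebra $A_0$ of $\OGb$ is an indecomposable local symmetric $\CO$-algebra, derived equivalent to $\OP$. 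Derived invariance of the Cartan determinant (for symmetric algebras) and of the centre additionally gives $|D| = |P|$ and $k(b) = k(P)$, where $D$ is any defect group of $b$.

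The central step, and the expected main obstacle, is upgrading the derived equivalence $A_0 \sim \OP$ to an algebra isomorphism $A_0 \cong \OP$. The approach is to analyse the tilting complex $T \in K^b(\proj(\OP))$ with $\End_{K^b(\OP)}(T)^{\mathrm{op}} \cong A_0$: since $\OP$ is local, every component of $T$ is a free $\OP$-module, and since $A_0$ is local, $T$ is indecomposable in the homotopy category up to contractible summands. Equivalently, one wants to show that the derived Picard group of $\OP$ is generated by shifts and (inner) automorphisms, so that any such $T$ is homotopy equivalent to $\OP$ up to a shift and contractible summands. Combined with the equality $\dim_k A_0 = |P|$ (coming from $\ell(b) = 1$ together with $\det C_b = |P|$) and the symmetry of both algebras, this should yield $A_0 \cong \OP$ and therefore a Morita equivalence $\OGb \sim_{\mathrm{Mor}} \OP$.

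Once $\OGb \sim_{\mathrm{Mor}} \OP$ is established, the converse to Puig's theorem on nilpotent blocks in characteristic zero (cited in the paragraph preceding the theorem) implies that $b$ is nilpotent. Puig's theorem then provides an isomorphism of source algebras $A \cong \OD$ for $D$ any defect group of $b$; combining this with $A_0 \cong \OP$ (both being basic) yields $\OD \cong \OP$. The integral isomorphism problem for $p$-groups over a complete local base $\CO$ of characteristic zero (Roggenkamp--Scott) then provides $D \cong P$, completing the argument.
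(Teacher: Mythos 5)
Your overall route is the same as the paper's: use Theorem~\ref{thm1} to get a derived equivalence between $\OGb$ and $\OP$, upgrade it to a Morita equivalence by exploiting the fact that $\OP$ is (split) local, and then invoke Puig's theorem to conclude nilpotency and identify the defect group. The paper handles the middle step with a single citation to \cite[6.7.5]{Zim}, which is exactly the statement that a tilting complex over a local algebra must be, up to shift and contractible summands, a free module, so that any algebra derived equivalent to a split local algebra is in fact Morita equivalent to it; and it then cites \cite[Theorem 8.2]{Puigbook} for both nilpotency and the defect group isomorphism in one stroke.

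Where your writeup departs from the paper is in the surrounding material, and some of it is confused rather than merely redundant. The observations about $\ell(b)=1$, the Cartan determinant, $k(b)=k(P)$, and the dimension count $\dim_k A_0 = |P|$ play no logical role: once you know the tilting complex $T$ over $\OP$ is a shift of a free module, $\End_{K^b(\OP)}(T)$ is a matrix algebra over $\OP^{\op}\cong\OP$, which gives the Morita equivalence directly and, since both basic algebras are then forced isomorphic, gives $A_0\cong\OP$ with no appeal to dimensions or symmetry. The sentence ``Combined with the equality $\dim_k A_0 = |P|$ \dots and the symmetry of both algebras, this should yield $A_0\cong\OP$'' is not a valid inference (two local symmetric algebras of the same dimension need not be isomorphic) and is also unnecessary. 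More importantly, the central claim -- that any tilting complex over $\OP$ is a shift of a free module -- is the whole content of the step, and you only gesture at it (``one wants to show \dots'', ``this should yield''). That is the one genuine soft spot: you need to actually prove or cite this fact, as the paper does via \cite[6.7.5]{Zim}. Finally, your appeal to Roggenkamp--Scott at the end is not needed: \cite[Theorem 8.2]{Puigbook} already delivers the isomorphism of defect groups.
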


\begin{proof}
Suppose that the categories $\coMack(G,b)$ and $\coMack(P)$ are 
Rickard equivalent. Then, by Theorem \ref{thm1}, the algebras
$\OGb$ and $\OP$ are  derived equivalent. Since $\OP$ is split
local, it follows from \cite[6.7.5]{Zim} that $\OGb$ and $\OP$
are Morita equivalent. Thus, by \cite[Theorem 8.2]{Puigbook}, $b$ is 
nilpotent with defect groups isomorphic to $P$.
\end{proof}


\bigskip\noindent
{\it Acknowledgements.} The first author would like to 
acknowledge support from the EPSRC grant EP/M02525X/1. Both 
authors would like to thank the EPFL for its hospitality
during the special research semester on Local Representation 
Theory and Simple Groups in 2016.

\bigskip


\begin{thebibliography}{WWW}
\bibitem{Aus} M. Auslander, {\em Representation dimension of Artin
algebras}. Lecture notes, Queen Mary College, London (1971).

\bibitem{FaHuKo} M. Fang, W. Hu, and S. K\"onig, {\em Derived
equivalences, restriction to self-injective subalgebras and
invariance of homological dimensions.} (2016) arXiv: 1607.03513.

\bibitem{HuXi} W. Hu and Ch. Xi, {\em Derived equivalences for
$\Phi$-Auslander-Yoneda algebras.} Trans. Amer. Math. Soc. 
{\bf 365} (2013), 5681--5711.

\bibitem{Iyama} O. Iyama, {\em Finiteness of representation dimension},
Proc. Amer. Math. Soc. {\bf 131} (2002), 1011--1014.

\bibitem{LincoMack} M. Linckelmann, {\em On equivalences for
cohomological Mackey functors}, Represent. Theory {\bf 20} (2016),
162--171.

\bibitem{MaVi} R. Mart\'inez-Villa, {\em The stable equivalence for
algebras of finite representation type.} Comm. Algebra
{\bf 13} (1985), 991--1018.

\bibitem{Punil} L. Puig, {\em Nilpotent blocks and their source 
algebras}, Invent. Math. {\bf 93}  (1988), 77--116.

\bibitem{Puigbook} L. Puig, {\em On the local structure of Morita
and Rickard equivalences between Brauer blocks}, Progress in
Math. {\bf 178}, Birkh\"auser Verlag, Basel (1999)

\bibitem{RickDer} J. Rickard, {\em Derived equivalences as derived
functors}, J. London Math. Soc. {\bf 43} (1991), 37--48.

\bibitem{Ricksplendid} J. Rickard, {\em Splendid equivalence:
derived categories and permutation modules}, Proc. London Math.
Soc. {\bf 72} (1996), 331--358.

\bibitem{Rogn} B. Rognerud, {\em Equivalences between blocks of
cohomological Mackey algebras}, Math. Z. {\bf 280} (2015), 421--449.

\bibitem{Scottnotes} L. L. Scott, unpublished notes (1990).

\bibitem{Tac} H. Tachikawa, {\em On dominant dimensions of
QF-3 algebras}. Proc. Amer. Math. Soc. {\bf 112} (1964), 249--266.

\bibitem{Weiss} A. Weiss, {\em Rigidity of $p$-adic $p$-torsion},
Ann. of Math. {\bf 127} (1988), 317--332.

\bibitem{YoshidaII} T. Yoshida, {\em On $G$-functors (II): Hecke
operators and $G$-functors}, J. Math. Soc. Japan {\bf 35} (1983),
179--190.

\bibitem{Zim} A. Zimmermann, {\em Representation theory: A
Homological Algebra Point of View.} Algebra and Applications,
Springer International Publishing (2014).
\end{thebibliography}
\end{document}